\documentclass{amsart}

\setlength{\oddsidemargin}{0.25in}

\setlength{\textwidth}{6in}

\setlength{\topmargin}{-0.25in}

\setlength{\headheight}{0.3in}

\setlength{\headsep}{0.2in}

\setlength{\textheight}{8.5in}

\setlength{\footskip}{0.5in}

\usepackage{latexsym}
\usepackage{amsfonts,amssymb} 
\usepackage{graphicx}
\usepackage{tikz}
\usepackage{caption}
\usepackage{subcaption}

\newcommand{\field}[1]{\mathbb{#1}}
\newcommand{\A}{\field{A}}
\newcommand{\C}{\field{C}}

\newcommand{\N}{\field{N}}

\newcommand{\R}{\field{R}}
\newcommand{\Sp}{\field{S}}

\newcommand{\Z}{\field{Z}}

\theoremstyle{plain}

\newtheorem{theorem}{Theorem}[section]
\newtheorem{proposition}[theorem]{Proposition}
\newtheorem{lemma}[theorem]{Lemma}

\theoremstyle{definition}

\theoremstyle{remark}

\begin{document}

\makeatletter	   
\makeatother     

\title{On the Uniqueness of Polynomial Embeddings of the real 1-sphere in the plane}
\author{Gene Freudenburg}
\date{\today} 

\maketitle

\pagestyle{plain}

\begin{abstract} This paper considers real forms of closed algebraic $\C^*$-embeddings in $\C^2$. 
The classification of such embeddings was recently completed by Cassou-Nogues, Koras, Palka and Russell. Based on their classification, this paper shows that, up to an algebraic change of coordinates, there is only one polynomial embedding of the real 1-sphere $\Sp^1$ in the affine plane $\R^2$. 
\end{abstract}

\section{Introduction} 
Let $\Sp^n$ denote the real $n$-sphere as an algebraic variety over $\R$. Daigle asked whether every polynomial embedding of
$\Sp^1$ in $\R^2$ is equivalent to the standard embedding.\footnote{D. Daigle, University of Ottawa, private communication, 2013}
Our main result, {\it Theorem\,\ref{main}}, gives an affirmative answer to this question. 
This result mirrors the Epimorphism Theorem of Abhyankar and Moh, and Suzuki: Over a field $k$ of characteristic zero, any polynomial embedding of the affine line $\A^1_k$ in the affine plane $\A^2_k$ is equivalent to the standard embedding \cite{Abhyankar.Moh.75, Suzuki.74}.  
The complexification of $\Sp^1$ is the complex algebraic torus $\C^*$, and in contrast to its real counterpart, 
there are infinitely many equivalence classes of polynomial embeddings of $\C^*$ in $\C^2$. The proof of {\it Theorem\,\ref{main}}  relies on the recent classification of closed 
$\C^*$-embeddings in $\C^2$ due to Cassou-Nogues, Koras, Palka and Russell found in 
\cite{Cassou-Nogues.Koras.Russell.09, Koras.Palka.Russell.16, Koras.Palka.ppt}; see also \cite{Kaliman.96, Borodzik.Zoladek.10, Sathaye.11}. 

The proof of {\it Theorem\,\ref{main}} also uses the polar group of the real plane $\R^2$. The polar group of a real form of a complex affine variety is introduced in \cite{Freudenburg.ppt18a}. 

In their classification, Cassou-Nogues, Koras, Palka and Russell show that each equivalence class of closed embeddings of $\C^*$ in $\C^2$ is represented by a polynomial with rational coefficients. Therefore, every polynomial embedding of $\C^*$ in $\C^2$ admits a real form as an embedding. The proof of {\it Theorem\,\ref{main}} shows that, if a closed embedding of $\C^*$ in $\C^2$ admits two distinct real forms, then this embedding is equivalent to the standard embedding, given by $xy=1$. 

One is thus led to ask about polynomial embeddings of $\Sp^n$ in $\R^{n+1}$. To the author's knowledge, there are no known examples of such embeddings which are not equivalent. 

Similarly, we ask if there exists {\it any} polynomial embedding of the torus $\Sp^1\times\Sp^1$ in $\R^3$. The usual rendition of a topological torus as a surface of revolution in $\R^3$ does indeed give an algebraic surface $T$ which is diffeomorphic to $\Sp^1\times\Sp^1$. However, it turns out that $T$ is a nontrivial algebraic $\Sp^1$-bundle over $\Sp^1$. This is shown in {\it Section\,\ref{torus}}. Note that $\Sp^1\times\Sp^1$ is a real form of the complex torus 
$\C^*\times\C^*$, which has polynomial embeddings in $\C^3$, for example, $xyz=1$. 
\medskip

\noindent {\bf Notation and Terminology.} 
Let $R$ be a ring. 
The group of units of the ring $R$ is denoted $R^*$. 
The multiplicative monoid $R\setminus\{ 0\}$ is denoted $R'$.
The polynomial ring in $n$ variables over the ring $R$ is denoted $R^{[n]}$.

$\R^n$ denotes affine $n$-space over $\R$. The real $n$-sphere $\Sp^n$ is the algebraic variety in $\R^{n+1}$ defined by the polynomial equation $x_0^2+\cdots +x_n^2=1$. A {\bf polynomial embedding} of $\Sp^n$ in $\R^{N+1}$ is of the form $F=0$ for some $F\in \R [x_0,...,x_N]$, $N\ge n$.
 The {\bf standard embedding} is given by $x_0^2+\cdots +x_n^2=1$. Two embeddings are {\bf equivalent} if they differ by an algebraic automorphism of $\R^{N+1}$. 

Let $X$ be an affine $\R$-variety with coordinate ring $\R [X]$, and $Y$ an affine $\C$-variety with coordinate ring $\C [Y]$. 
Then $X$ is a {\bf real form} of $Y$ if $\C\otimes_{\R}\R[X]=\C [Y]$. 
\medskip

\noindent{\bf Acknowledgment.} The author wishes to acknowledge that many ideas in this paper were influenced by discussions with Daniel Daigle (University of Ottawa), Shulim Kaliman (University of Miami), Lucy Moser-Jauslin (Universite de Bourgogne), Peter Russell (McGill University) and Karol Palka (Warsaw University). 


\section{Preliminary Results} 

Throughout this section, $A$ is an affine integral domain over $\R$, and $B=\C\otimes_{\R}A=A[i]=A\oplus iA$. Assume that $B$ is also an integral domain. Given $f\in B$, write $f=f_1+if_2$ for $f_1,f_2\in A$. The {\bf conjugate} of $f$ is $\bar{f}=f_1-if_2$. 

\subsection{Polar Groups} Some facts about polar groups are required, as laid out in \cite{Freudenburg.ppt18a}. 

The element $f\in B'$ has {\bf no real divisor} if $f=rg$ for $r\in A$ and $g\in B$ implies $r\in A^*$. The set of $f\in B'$ with no real divisor is denoted $\Delta (B)$, and the set of irreducible elements of $\Delta (B)$ is denoted by $\Delta (B)_1$. 

\begin{theorem}\label{UFDUFD} {\rm (\cite{Freudenburg.ppt18a},Thm.\,5.1,Thm.\,5.5)} Assume that $A$ and $B$ are UFDs. Given $f\in B'$, 
$f\in\Delta (B)$ if and only if $\gcd (f,\bar{f})=1$. 
\end{theorem}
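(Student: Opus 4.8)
The plan is to treat the two implications separately; the reverse one is purely formal, while the forward one uses the structure of $B$ as a complexification of $A$. I will use freely that $A\cap B^*=A^*$ (write the inverse of a unit of $A$ as $s_1+is_2$ with $s_j\in A$ and use that $B$ is a domain), and I will write $N(g)=g\bar g\in A$ for $g\in B$, noting that $N(g)\in A^*$ precisely when $g\in B^*$. For the easy direction, suppose $\gcd(f,\bar f)=1$ and write $f=rg$ with $r\in A$, $g\in B$; conjugating and using $\bar r=r$ gives $\bar f=r\bar g$, so $r$ divides both $f$ and $\bar f$ in the UFD $B$, whence $r\in B^*$ and therefore $r\in A^*$. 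Thus $f$ has no real divisor, i.e. $f\in\Delta(B)$.

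For the converse I argue by contraposition: assuming $d:=\gcd_B(f,\bar f)$ is a non-unit, I want to exhibit a real divisor of $f$. Since conjugation is a ring automorphism of $B$ interchanging $f$ and $\bar f$, it permutes the common divisors of $\{f,\bar f\}$ and hence carries a gcd to a gcd; so $\bar d=ud$ for some $u\in B^*$, and conjugating again gives $u\bar u=1$. The crux is then a statement of Hilbert~90 type: \emph{every $u\in B^*$ with $u\bar u=1$ has the form $u=\lambda/\bar\lambda$ for some $\lambda\in B^*$.} Granting this, $\bar d=(\lambda/\bar\lambda)d$ rearranges to $\overline{\lambda d}=\lambda d$, so $r:=\lambda d$ lies in $A$; since $\lambda\in B^*$ we have $r\sim d$, hence $r\mid f$, while $r\notin A^*$ because $d$ is a non-unit. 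So $r$ is a real divisor of $f$, and $f\notin\Delta(B)$.

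It remains to prove the Hilbert~90 statement, which is the only step I expect to require genuine work and is where both UFD hypotheses (and $\tfrac12\in A$) enter. Writing $u=a+bi$, the condition $u\bar u=1$ reads $a^2+b^2=1$, and a direct computation gives $((1+a)+bi)^2=2(1+a)(a+bi)$ with $N((1+a)+bi)=2(1+a)$; so $\lambda_0:=(1+a)+bi$ satisfies $\lambda_0/\bar\lambda_0=u$ (when $1+a\neq0$), but its norm $2(1+a)$ need not be a unit. To repair this, note $(1+a)(1-a)=b^2$ while $\gcd(1+a,1-a)\mid 2\in A^*$, so the two factors are coprime; as $A$ is a UFD and their product is a square, $1+a=we^2$ with $w\in A^*$, $e\in A$ and $e\mid b$. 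Then $\lambda:=\lambda_0/e$ lies in $B$, has norm $2w\in A^*$, hence is a unit, and still satisfies $\lambda/\bar\lambda=u$; the degenerate case $1+a=0$ forces $u=-1$ and is settled by $\lambda=i$. (Geometrically this says every $A$-point of the conic $x^2+y^2=z^2$ comes from the standard rational parametrization, which holds since $\operatorname{Pic}(A)=0$.) The principal obstacle throughout is exactly this point: showing that a divisor of $f$ which is conjugation--invariant only up to a unit is nevertheless associate to a genuinely real element.
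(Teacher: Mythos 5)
The paper does not actually prove this statement: it is imported verbatim from \cite{Freudenburg.ppt18a} (Thm.~5.1 and 5.5, a paper listed as ``in preparation''), so there is no in-paper argument to compare yours against. Judged on its own, your proof is correct and complete. The easy direction is exactly as formal as you say (using $A\cap B^*=A^*$, which your parenthetical justifies properly since $B$ is a domain). For the converse, you correctly identify the one real difficulty: the gcd $d$ of $f$ and $\bar f$ is only conjugation-invariant up to a unit $u$ with $u\bar u=1$, and one must show $u=\lambda/\bar\lambda$ for a \emph{unit} $\lambda$ so that $\lambda d$ is a genuinely real non-unit divisor of $f$. Your Hilbert-90 argument checks out: the identity $\bigl((1+a)+bi\bigr)^2=2(1+a)(a+bi)$ with norm $2(1+a)$, the coprimality of $1\pm a$ (their gcd divides $2\in A^*$), the factorization $1+a=we^2$ with $e\mid b$ from unique factorization in $A$, and the degenerate case $1+a=0$ (which forces $b=0$, $u=-1$) are all handled correctly, and the resulting $\lambda=\lambda_0/e$ does lie in $B^*$ because its norm $2w$ is a unit of $A$. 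This is a clean, self-contained proof of the quoted theorem, using the UFD hypothesis on $B$ to get the gcd and on $A$ to get the parametrization of norm-one units.
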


Let $K={\rm frac}(A)$ and $L={\rm frac}(B)$. The {\bf polar group} of $A$ is the quotient group $L^*/B^*K^*$, which is denoted $\Pi (A)$. 
This group is an invariant of $A$ which encodes information about the residual divisors in $B$ over $A$. 
Given $f\in B$, let $[f]$ denote its image in $\Pi (A)$. A key feature of this group is that $[f]^{-1}=[\bar{f}]$.

\subsection{Units and Gradings}

\begin{lemma}\label{trivial-units} Suppose that $A^*=\R^*$. If $f\in B^*$, then $f^{-1}=\lambda\bar{f}$ for some $\lambda\in\R^*$. 
\end{lemma}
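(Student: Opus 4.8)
The plan is to exploit the norm $N(f) = f\bar f$ of an element $f \in B$ and to show that when $f$ is a unit this norm is a nonzero real scalar, from which the claim is immediate.

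First I would record that conjugation $f \mapsto \bar f$ is a ring automorphism of $B$ fixing $A$ pointwise; this is immediate from the decomposition $B = A \oplus iA$ together with $i^2 = -1$. Hence if $f \in B^*$, then applying conjugation to a relation $fg = 1$ shows $\bar f \in B^*$ as well; in particular $\bar f \ne 0$, so the product $N(f) := f\bar f$ lies in $B^*$. Writing $f = f_1 + if_2$ with $f_1, f_2 \in A$, one has $N(f) = f_1^2 + f_2^2 \in A$.

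The only step needing a genuine (if short) argument is that a unit of $B$ lying in $A$ is already a unit of $A$, i.e. $A \cap B^* = A^*$. To see this I would suppose $a \in A$ and $ab = 1$ with $b = b_1 + ib_2 \in B$; comparing the $A$- and $iA$-components in $B = A \oplus iA$ gives $ab_1 = 1$ and $ab_2 = 0$, and since $B$ is a domain and $a \ne 0$ the second equation forces $b_2 = 0$ while the first gives $a \in A^*$. Applying this to $N(f)$ and invoking the hypothesis $A^* = \R^*$ yields $N(f) \in \R^*$.

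It then remains only to set $\lambda = N(f)^{-1} \in \R^*$ and observe that $f \cdot (\lambda \bar f) = \lambda\, N(f) = 1$, so $f^{-1} = \lambda \bar f$, as claimed. I do not anticipate any real obstacle here; the one subtlety worth stating carefully is the identity $A \cap B^* = A^*$, whose proof relies on $B$ being an integral domain — part of the standing hypothesis of this section.
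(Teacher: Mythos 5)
Your proof is correct and follows essentially the same route as the paper: both show $\bar f\in B^*$ by conjugating $ff^{-1}=1$, observe that $f\bar f\in B^*\cap A=A^*=\R^*$, and solve for $f^{-1}$. The only difference is that you explicitly justify the identity $A\cap B^*=A^*$ (using that $B$ is a domain), which the paper uses without comment.
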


\begin{proof} 
We have:
\[
ff^{-1}=1 \implies \bar{f}(\overline{f^{-1}})=1 \implies \bar{f}\in B^*
\]
Therefore, $f\bar{f}\in B^*\cap A=A^*=\R^*$. If $f\bar{f}=\rho$, then $\bar{f}=\rho f^{-1}$.
\end{proof}

\begin{lemma}\label{deg} Suppose that $B$ has a $\Z$-grading, and let $\deg$ be the induced degree function on $B$. 
If $A$ is a graded subring, then $\deg f=\deg\bar{f}$ for all $f\in B$. 
\end{lemma}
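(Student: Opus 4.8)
The plan is to reduce everything to one fact: the imaginary unit $i\in B$ is homogeneous of degree $0$ for the given grading. Granting this, conjugation will automatically preserve each graded piece of $B$, and the desired degree equality follows at once.

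To see that $i$ is homogeneous, I would argue with leading and trailing terms. Write $i=\sum_n c_n$ with $c_n\in B_n$, only finitely many nonzero, and let $d$ and $e$ be the largest and smallest indices with $c_d\neq 0$ and $c_e\neq 0$. In $i^2$ the homogeneous component of degree $2d$ equals $c_d^2$ and that of degree $2e$ equals $c_e^2$; since $B$ is an integral domain, both are nonzero. But $i^2=-1$ lies in $B_0$ (the identity element of a $\Z$-graded ring is homogeneous of degree $0$), so $2d=2e=0$, forcing $i=c_0\in B_0$. This is the only step with real content; the integral-domain hypothesis on $B$ is precisely what powers it, and it is the step I would expect to be the crux.

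Next, writing $A_n:=A\cap B_n$, the hypothesis that $A$ is a graded subring means $A=\bigoplus_n A_n$. For a homogeneous $f\in B_n$, write $f=a+ib$ with $a,b\in A$ and expand $a=\sum_m a_m$, $b=\sum_m b_m$ with $a_m,b_m\in A_m$. Since $i\in B_0$, each $a_m+ib_m$ lies in $B_m$, so comparing homogeneous components with $f\in B_n$ gives $a=a_n$ and $b=b_n$; hence $\bar{f}=a-ib=a_n-ib_n\in B_n$. Thus conjugation maps $B_n$ into $B_n$, and being an involution it satisfies $\overline{B_n}=B_n$; in particular a homogeneous element of $B$ is zero if and only if its conjugate is.

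Finally, for arbitrary $f\in B$ take the homogeneous decomposition $f=\sum_n f_n$ with $f_n\in B_n$. Then $\bar{f}=\sum_n\overline{f_n}$ with $\overline{f_n}\in B_n$, so this is the homogeneous decomposition of $\bar{f}$, and $\overline{f_n}\neq 0$ exactly when $f_n\neq 0$. Therefore $\deg\bar{f}=\max\{n:\overline{f_n}\neq 0\}=\max\{n:f_n\neq 0\}=\deg f$, which proves the lemma.
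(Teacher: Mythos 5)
Your proof is correct, and it establishes something slightly stronger than the paper does, by a genuinely different route. The paper's proof is a leading-term argument: writing $f=f_1+if_2$ with $f_1,f_2\in A$, it notes that cancellation of top-degree terms would force $\eta(f_1)+i\eta(f_2)=0$ for the top homogeneous summands, which is impossible since $B=A\oplus iA$; hence $\deg f=\max\{\deg f_1,\deg f_2\}=\deg\bar{f}$. You instead prove that conjugation is a graded map, i.e.\ $\overline{B_n}=B_n$ for every $n$, from which equality of the entire degree supports of $f$ and $\bar f$ (not just the top degree) is immediate. The pivot of your argument --- that $i$ is homogeneous of degree $0$, deduced from $i^2=-1$ and the integral-domain hypothesis via a leading/trailing-term comparison --- is correct and is left implicit in the paper, which tacitly treats $i\eta(f_2)$ as homogeneous of degree $\deg f_2$; making this explicit is a genuine gain in rigor. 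One micro-quibble: when you say that comparing homogeneous components ``gives $a=a_n$ and $b=b_n$,'' the comparison actually yields $a_m+ib_m=0$ for $m\ne n$, and you should then invoke the directness of the decomposition $B=A\oplus iA$ (part of the standing setup) to conclude $a_m=b_m=0$; this is the same fact the paper uses to pass from $\eta(f_1)+i\eta(f_2)=0$ to $\eta(f_1)=\eta(f_2)=0$, so it costs one clause to state.
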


\begin{proof} Given $g\in B'$, let $\eta (g)$ denote the highest degree homogeneous summand of $g$.
Note that, since $A$ is a graded subring, $\eta (g)\in A$ if $g\in A$. 

Suppose that $f\in B'$, and write $f=f_1+if_2$ for $f_1,f_2\in A$. 
If $\deg f<\max\{\deg f_1,\deg f_2\}$, then 
$\deg f_1=\deg f_2$, which implies $\eta (f_1)+i\eta (f_2)=0$. But then $\eta (f_1)=\eta (f_2)=0$ implies $\eta (f)=0$, a contradiction. Therefore:
\[
\deg f=\max\{ \deg f_1,\deg f_2\} = \deg \bar{f}
\]
\end{proof}

Recall that an $\N$-grading of $B$ is a $\Z$-grading $\bigoplus_nB_n$ in which $B_n=\{0\}$ for $n\in\Z\setminus\N$.

\begin{lemma}\label{unit-degree} Suppose that $B$ has an $\N$-grading
and $A$ is a graded subring, and let $\deg$ be the induced degree function on $B$. 
Suppose that $P\in A'$ is prime in $B$ and $(A/PA)^*=\R^*$. Given $f\in B$, if the image of $f$ in $B/PB$ is a unit, then either $f\in B^*$ or 
$\deg P\le 2\deg f$.
\end{lemma}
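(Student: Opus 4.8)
The plan is to analyze what it means for the image of $f$ in $B/PB$ to be a unit, using both the grading and the hypothesis that $(A/PA)^*=\R^*$. Write $f=f_1+if_2$ with $f_1,f_2\in A$. Suppose the image $\tilde f$ of $f$ in $B/PB$ is a unit; then there is $g\in B$ with $fg\equiv 1 \pmod{PB}$, i.e.\ $fg=1+Ph$ for some $h\in B$. Applying conjugation (which fixes $P\in A$) gives $\bar f\bar g=1+P\bar h$, so the image of $\bar f$ in $B/PB$ is also a unit; hence the image of the product $f\bar f$ in $B/PB$ is a unit. Now $f\bar f=f_1^2+f_2^2\in A$, so its image lies in the subring $A/(PB\cap A)$. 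Since $P$ is prime in $B$ and $P\in A$, we have $PB\cap A=PA$ (as $A\to B$ is faithfully flat, or more elementarily because $B=A\oplus iA$ and $P$ does not involve $i$), so the image of $f\bar f$ lies in $A/PA$ and is a unit there, forcing $f\bar f\equiv\lambda\pmod{PA}$ for some $\lambda\in\R^*$ by the hypothesis $(A/PA)^*=\R^*$. Thus $f\bar f-\lambda\in PA$, say $f\bar f=\lambda+PQ$ for some $Q\in A$.

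The next step is to run a degree argument on the identity $f\bar f=\lambda+PQ$ inside the $\N$-graded ring $B$. By Lemma \ref{deg}, $\deg\bar f=\deg f$, so $\deg(f\bar f)\le 2\deg f$ (with equality unless the top-degree terms cancel — but I'll only need the inequality). On the other side, if $Q\ne 0$ then $\deg(PQ)=\deg P+\deg Q\ge\deg P$, since the grading is an $\N$-grading and so $\deg Q\ge 0$. If $Q=0$ then $f\bar f=\lambda\in\R^*$, which says $f\in B^*$ (indeed $\bar f/\lambda$ is an inverse), and we are in the first alternative of the conclusion. If $Q\ne 0$, then comparing degrees in $f\bar f-PQ=\lambda\in B_0$: the term $PQ$ has degree $\ge\deg P$, the term $f\bar f$ has degree $\le 2\deg f$, and their difference has degree $0$.

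The one subtle point — and the step I expect to be the main obstacle — is ruling out the degenerate possibility that $\deg P>2\deg f$ while $Q\ne 0$: in that case we would need the degree-$\deg P$ part of $PQ$ to cancel against something, but $f\bar f$ cannot supply it since $\deg(f\bar f)\le 2\deg f<\deg P$, and $\lambda$ has degree $0<\deg P$ (note $\deg P\ge 1$ as $P\notin\R^*$, because $P$ prime in $B$ forces $P\notin B^*\supseteq\R^*$). Hence the top homogeneous component of $PQ$ must vanish; but $P$ and $Q$ are nonzero homogeneous-leading elements of the domain $B$, so the leading term of $PQ$ is the product of their leading terms, which is nonzero — contradiction. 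Therefore $Q\ne 0$ forces $\deg P\le 2\deg f$, and combined with the $Q=0$ case this gives exactly the dichotomy $f\in B^*$ or $\deg P\le 2\deg f$. I should double-check that the leading-form argument is valid when $B$ is merely an $\N$-graded domain (it is: in a graded domain the product of the leading forms of two nonzero elements is the leading form of the product, hence nonzero), and that nothing goes wrong if $f$ itself has degree $0$, in which case $f\in A$, $f\bar f\in A\cap\R[\text{deg }0]$ and the inequality $\deg P\le 0$ is impossible, so necessarily $Q=0$ and $f\in B^*$, consistent with the statement.
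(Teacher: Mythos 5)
Your proof is correct and follows essentially the same route as the paper: both reduce to the identity $f\bar f=\lambda+PQ$ with $\lambda\in\R^*$ (the paper gets it by applying Lemma~\ref{trivial-units} to the real form $A/PA$ of $B/PB$, you by observing directly that $f\bar f$ is a unit of $A/PA$), and then conclude by the same degree count using Lemma~\ref{deg}. The extra care you take with $PB\cap A=PA$ and with the leading-form argument in the graded domain is fine but not a different method.
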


\begin{proof} First note that $A/PA$ is a real form of $B/PB$. 

Assume that $f\not\in B^*$. Let $\pi :B\to B/PB$ be the standard surjection. By hypothesis, there exists $h\in B'$ such that $\pi (f)\pi (h)=1$. Therefore, there exists $Q\in B$ with $fh=1+PQ$. 
In addition, by {\it Lemma\,\ref{trivial-units}}, there exists $\lambda\in\R^*$ with $\pi (h)=\lambda \overline{\pi (f)}=\lambda\pi (\bar{f})$. Therefore, there exists $R\in B$ with 
$h=\lambda\bar{f}+PR$. So altogether we can write $\lambda f\bar{f}=1+PS$ for some $S\in B$. Note that $S\ne 0$, since $f\not\in B^*$. By {\it Lemma\,\ref{deg}}, we have $\deg f=\deg\bar{f}$. Therefore, 
$2\deg f=\deg P+\deg S\ge\deg P$.
\end{proof}

\subsection{Polynomial Rings}

In this section, assume that:
\[
A=\R [x,y]\cong\R^{[2]} \quad {\rm and}\quad B=\C\otimes_{\R}A=\C [x,y]\cong\C^{[2]}
\]
We consider the standard $\N$-grading of $A$ and $B$, wherein $x$ and $y$ are homogeneous of degree one. 

\begin{lemma}\label{AMS} Let $\alpha\in A$ be such that $B=\C [\alpha ,u]$ for some $u\in B$. Then there exists $\beta\in A$ such that $A=\R [\alpha ,\beta]$. 
If $u\in A$, then we may take $u=\beta$. 
\end{lemma}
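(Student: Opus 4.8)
The plan is to reduce the statement to producing a conjugation\nobreakdash-invariant second coordinate and then to solve a small explicit cocycle equation. First I would record the \emph{descent principle}: if $\alpha,\beta\in A$ and $B=\C[\alpha,\beta]$, then $A=\R[\alpha,\beta]$. Indeed $\R[\alpha,\beta]\subseteq A$ trivially, and since $B\cong\C^{[2]}$ has transcendence degree $2$ over $\C$, the generators $\alpha,\beta$ are algebraically independent over $\C$, so every element of $B$ has a unique expression $\sum\gamma_{ij}\alpha^i\beta^j$ with $\gamma_{ij}\in\C$; for $a\in A$ we have $a=\bar a$, and conjugating this expression (using $\bar\alpha=\alpha$, $\bar\beta=\beta$) together with uniqueness forces $\gamma_{ij}\in\R$, so $a\in\R[\alpha,\beta]$. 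Thus it suffices to find $\beta\in A$ with $B=\C[\alpha,\beta]$.

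Next I would pin down the structure available. If $\alpha\in\C$ then $B=\C[\alpha,u]=\C[u]$, contradicting $B\cong\C^{[2]}$; hence $\alpha$ is transcendental over $\C$, and $B=\C[\alpha][u]$ is a polynomial ring in the one variable $u$ over the integral domain $\C[\alpha]$, whose unit group is $\C^*$. Conjugating the identity $B=\C[\alpha,u]$ and using $\bar\alpha=\alpha$ gives $B=\C[\alpha,\bar u]$ as well. By the classical description of the generators of a polynomial ring in one variable over an integral domain (a degree argument), there exist $c_0\in\C^*$ and $g_0\in\C[\alpha]$ with $\bar u=c_0u+g_0(\alpha)$. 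Conjugating this identity once more and comparing coefficients of $u^1$ and $u^0$ (legitimate since $u$ is transcendental over $\C$) yields the two relations $c_0\bar c_0=1$ and $\bar c_0g_0+\overline{g_0}=0$ in $\C[\alpha]$.

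Then I would solve the corresponding coboundary equation. Choose $c\in\C^*$ with $c^2=c_0$; since $|c_0|=1$ this gives $c\bar c=1$ and $c/\bar c=c_0$. Set $\beta=cu+\tfrac12\bar c\,g_0(\alpha)$. A direct computation using $\bar u=c_0u+g_0(\alpha)$, the identity $\bar cc_0=c$, and the relation $\overline{g_0}=-\bar c_0g_0$ (which makes $\overline{\bar c\,g_0}=-\bar c\,g_0$) shows $\bar\beta=\beta$, so $\beta\in A$. Because $c\in\C^*$, the $\C[\alpha]$\nobreakdash-algebra endomorphism of $B$ sending $u\mapsto\beta$ is invertible, so $B=\C[\alpha,\beta]$, and the descent principle gives $A=\R[\alpha,\beta]$. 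For the final clause, if $u\in A$ then $\bar u=u$ forces $c_0=1$ and $g_0=0$ by uniqueness of the expression $\bar u=c_0u+g_0(\alpha)$, so one may take $c=1$ and $\beta=u$.

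I expect the main obstacle to be the bookkeeping in the middle paragraph: extracting the affine normal form $\bar u=c_0u+g_0(\alpha)$ and verifying that the constraint $\bar c_0g_0+\overline{g_0}=0$ imposed by $\overline{\bar u}=u$ is \emph{precisely} what allows a conjugation\nobreakdash-invariant choice of $\beta$; this is a baby instance of the vanishing of a Galois $H^1$, made completely explicit here. (One could instead argue structurally, noting that $\mathrm{Spec}\,A\to\A^1_\R$ is an $\A^1$\nobreakdash-fibration that becomes the trivial bundle after base change to $\C$ and whose every fibre is a form of $\A^1$ over a perfect field, hence trivial, with triviality over $\R$ following from $H^1$\nobreakdash-vanishing on the affine base; the elementary argument above avoids this machinery.)
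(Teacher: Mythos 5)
Your proof is correct, but it takes a genuinely different route from the paper's. The paper argues via two substantial external results: since $B/\alpha B\cong\C^{[1]}$, the quotient $A/\alpha A$ is a real form of $\C^{[1]}$ and hence is $\R^{[1]}$ by a theorem of Russell \cite{Russell.81}; the Abhyankar--Moh--Suzuki theorem over $\R$ then yields $\beta$ with $A=\R[\alpha,\beta]$, and the case $u\in A$ is disposed of by citing Cor.~3.28 of \cite{Freudenburg.17} (which is exactly your ``descent principle''). You instead stay entirely inside $B=\C[\alpha][u]$: conjugation gives $B=\C[\alpha][\bar u]$, the affine normal form $\bar u=c_0u+g_0(\alpha)$ with $c_0\in\C^*$ follows from the standard degree argument over the domain $\C[\alpha]$, involutivity of conjugation yields the cocycle relations $c_0\bar c_0=1$ and $\bar c_0g_0+\overline{g_0}=0$, and the explicit splitting $\beta=cu+\tfrac12\bar c\,g_0$ with $c^2=c_0$ is a hands-on Hilbert~90 for the affine group of the line over $\C[\alpha]$. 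I checked the computation: $\bar c c_0=c$ and $\overline{\bar c\,g_0}=-\bar c\,g_0$ do give $\bar\beta=\beta$, $c\in\C^*$ gives $B=\C[\alpha,\beta]$, and your uniqueness-of-coefficients argument correctly proves the descent principle and the final clause. What your argument buys is self-containedness and an explicit construction of $\beta$ from $u$, avoiding both Russell's classification of real forms of $\C^{[1]}$ and the real AMS theorem; what the paper's argument buys is brevity and a conceptual slogan (a real form of a coordinate line is a coordinate line), at the price of the two citations.
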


\begin{proof} If $u\in A$, then $A=\R [\alpha ,u]$ by Cor.\,3.28 of \cite{Freudenburg.17}. So assume $u\not\in A$. 

$A/\alpha A$ is a real form of $B/\alpha B\cong\C^{[1]}$, and it is known that the only real form of $\C^{[1]}$ is $\R^{[1]}$ (see \cite{Russell.81}). Therefore, $A/\alpha A\cong\R^{[1]}$. By the Abhyankar-Moh-Suzuki Theorem \cite{Abhyankar.Moh.75, Suzuki.74}, there exists $\beta\in A$ with $A=\R [\alpha ,\beta ]$. 
\end{proof}

\begin{lemma} Let $Q=x^2+y^2-1\in A$. If $P\in A$ is such that $A/PA\cong_{\R}A/QA$, then:
\[
B/PB\cong_{\C}B/QB =\C[t,t^{-1}]
\]
\end{lemma}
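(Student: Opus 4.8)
The plan is to exploit the fact that $B/QB \cong \C[t,t^{-1}]$ is the coordinate ring of $\C^*$, together with the hypothesis that $A/PA \cong_\R A/QA$, to transfer the isomorphism to the complexifications. First I would observe that since $A/QA$ is a ring, the hypothesis $A/PA \cong_\R A/QA$ gives in particular that $A/PA$ is a real form of $B/QB$. More directly: an $\R$-algebra isomorphism $\varphi\colon A/PA \to A/QA$ induces, after tensoring with $\C$ over $\R$, a $\C$-algebra isomorphism $\C\otimes_\R (A/PA) \to \C\otimes_\R (A/QA)$. So the key step is to identify each side: I would use that tensoring is exact and commutes with taking quotients, so $\C\otimes_\R (A/PA) = B/PB$ and $\C\otimes_\R (A/QA) = B/QB$. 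Since $B = \C\otimes_\R A = \C[x,y]$ and $Q = x^2+y^2-1$, the quotient $B/QB$ is the ring $\C[x,y]/(x^2+y^2-1)$, which is classically isomorphic to $\C[t,t^{-1}]$ via $t = x+iy$, $t^{-1} = x-iy$ (so that $tt^{-1} = x^2+y^2 = 1$ in the quotient). Stringing these together yields $B/PB \cong_\C B/QB = \C[t,t^{-1}]$, which is the claim.

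The steps in order: (1) record the standard fact that for any ideal $I$ of $A$, $\C\otimes_\R (A/I) \cong (\C\otimes_\R A)/(\C\otimes_\R I) = B/(IB)$, applied to $I = PA$ and $I = QA$; (2) apply $\C\otimes_\R -$ to the given $\R$-isomorphism $A/PA \cong_\R A/QA$ to get $B/PB \cong_\C B/QB$; (3) verify the explicit isomorphism $B/QB \cong \C[t,t^{-1}]$ by sending $x \mapsto (t+t^{-1})/2$, $y \mapsto (t-t^{-1})/(2i)$, checking this is well-defined (the relation $x^2+y^2-1$ maps to $0$) and has the obvious inverse $t \mapsto x+iy$. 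None of these steps presents a real obstacle; the mildly delicate point is step (3), ensuring the map $\C[t,t^{-1}] \to B/QB$ sending $t \mapsto x+iy$ is well-defined and bijective — one checks $x+iy$ is a unit in $B/QB$ with inverse $x-iy$, giving a $\C$-algebra map out of $\C[t,t^{-1}]$, and then checks it is surjective (its image contains $x$ and $y$) and injective (e.g. by comparing with the known structure of the smooth affine conic, or by a direct degree argument). I expect step (3) to be the main, though still routine, obstacle; everything else is formal manipulation of tensor products and quotients.

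Alternatively, if one prefers to avoid re-deriving $B/QB \cong \C[t,t^{-1}]$ from scratch, one can simply cite that $Q = x^2+y^2-1$ defines $\Sp^1$, whose complexification $\C^*$ has coordinate ring $\C[t,t^{-1}]$, this being exactly the content recalled in the introduction; then only steps (1) and (2) require argument, and the displayed equality $B/QB = \C[t,t^{-1}]$ is taken as given.
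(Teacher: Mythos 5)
Your proposal is correct and is essentially the paper's own argument: the paper likewise extends the given $\R$-isomorphism $A/PA\to A/QA$ to the complexifications (written there as adjoining $z$ with $z^2+1=0$ and setting $\beta(z)=w$, which is just the base change $\C\otimes_{\R}-$ you describe), using that $\C\otimes_{\R}(A/I)=B/IB$. Your step (3), the explicit identification $B/QB\cong\C[t,t^{-1}]$ via $t=x+iy$, is fine and is simply taken as known in the paper.
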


\begin{proof} Let $A_1=A/QA$ and $A_2=A/PA$, and let $\alpha :A_1\to A_2$ be an isomorphism of $\R$-algebras. 
Let $B_1=B/QA=A_1[z]$, where $z^2+1=0$, and $B_2=B/PA=A_2[w]$, where $w^2+1=0$. Extend $\alpha$ to $\beta :B_1\to B_2$ by setting $\beta (z)=w$. 
Then $\beta$ is an $\R$-algebra isomorphism, and since $\beta (\R[z])=\R[w]$, we can view $\beta$ as an isomorphism of $\C$-algebras. 
\end{proof}

\begin{lemma}\label{P-form} Suppose that $u,v\in B$ satisfy $B=\C [u,v]$ and $[v]\ne 1$ in $\Pi (A)$. Let $P\in B'$ have the form $P=v^mf+1$ for $f\in B'$ and $m\ge 1$. Assume that:
\begin{enumerate}
\item $P\in A$
\item $P$ is irreducible in $B$ 
\item $(A/PA)^*=\R^*$
\end{enumerate}
Then $m=1$ and $fB=\bar{v}B$. 
\end{lemma}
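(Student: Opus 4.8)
The plan is to use the hypothesis that $P = v^m f + 1 \in A$ is real-coefficient to extract strong constraints from conjugation, then apply the preceding lemmas. First I would consider the image of $P$ in the quotient $B/\bar{v}B$. Since $P - 1 = v^m f$, and we want to relate $v$ to $\bar{v}$, the natural move is to reduce modulo $\bar v$: in $B/\bar v B$ we have $\bar P = \overline{v^m f} = \bar v^m \bar f \equiv 0$, so $P \equiv 1 \pmod{\bar v B}$. Thus the image of $v$ in $B/\bar v B$... wait, better: the image of $v^m f$ in $B/\bar vB$ equals $P - 1$, and since $P \in A$ we can also conjugate the whole relation $P = v^mf+1$ to get $P = \bar v^m \bar f + 1$, hence $v^m f = \bar v^m \bar f$ in $B$. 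This is the key algebraic identity. Since $B = \C[u,v]$ is a UFD and $[v] \neq 1$ in $\Pi(A) = L^*/B^*K^*$ (so in particular $\gcd(v,\bar v)=1$ by Theorem \ref{UFDUFD}, as $v \in \Delta(B)$ — one should check $v$ is not divisible by a real element, which follows from $[v]\neq 1$), the equation $v^m f = \bar v^m \bar f$ forces $\bar v^m \mid f$ in $B$. Write $f = \bar v^m g$; then $v^m \bar v^m g = \bar v^m \overline{\bar v^m g} = \bar v^m v^m \bar g$, so $g = \bar g$, i.e. $g \in A$.

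Next I would use condition (3), that $(A/PA)^* = \R^*$, together with Lemma \ref{unit-degree}, to pin down $m$ and kill $g$. We now have $P = (v\bar v)^m g + 1$. The element $v \bar v$ lies in $A$ (it is self-conjugate). In $B/PB$ the image of $(v\bar v)^m g$ equals $-1$, a unit; hence the image of $v\bar v$ is a unit in $B/PB$, and likewise the image of $g$ is a unit. Apply Lemma \ref{unit-degree} with the prime $P \in A'$ (irreducible in $B$ by (2)) and the element $v \bar v$: either $v\bar v \in B^*$ or $\deg P \le 2 \deg(v\bar v)$. The case $v \bar v \in B^*$ would force $v \in B^*$, contradicting $[v] \neq 1$. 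So $\deg P \le 2\deg(v\bar v) = 2\deg v + 2\deg\bar v = 4\deg v$ by Lemma \ref{deg}. On the other hand, $\deg P = \deg((v\bar v)^m g) = 2m\deg v + \deg g \ge 2m \deg v$ (assuming the top-degree terms don't cancel — which I'd justify from $P\in A$ having the same degree as its leading part, or by noting $\deg(v^m f)=\deg P$ directly since $P = v^mf+1$ and $\deg v^m f \ge \deg v \ge 1$). Combining: $2m \deg v \le 2m\deg v + \deg g = \deg P \le 4\deg v$, so $m \le 2$, and if $m = 2$ then $\deg g = 0$, i.e. $g \in \R^*$.

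The remaining work is to rule out $m = 2$ and, when $m = 1$, to show $g \in \R^*$ as well (giving $fB = \bar v^m g B = \bar v B$). For $m=2$: we'd have $P = (v\bar v)^2 g + 1$ with $g \in \R^*$, so $P = (\sqrt{|g|}\, v\bar v)^2 \cdot(\pm 1) + 1$; absorbing the constant, $\pm(P-1)$ is a square in $A$ (up to units), namely $(v\bar v)^2$ times a constant. I expect the obstruction here to be the main one: I would argue that $A/PA \cong_{\R} A/QA$ where $Q = x^2+y^2-1$ by the structure lemmas above, or more directly reach a contradiction with $(A/PA)^*=\R^*$ or with $B/PB \cong \C[t,t^{-1}]$ by analyzing the curve $P=0$ — if $P = c(v\bar v)^2 - 1$ then $P = 0$ gives $(v\bar v)^2 = 1/c$, so $v\bar v$ takes only finitely many values on $\{P=0\}$, making $v\bar v$ (a nonconstant element of $A$, nonconstant since $v\notin B^*$) map to something integral over $\R$ in $A/PA$, forcing $A/PA$ to contain zero divisors or fail to be the coordinate ring of an irreducible curve — contradicting irreducibility of $P$ in $B$. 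This same reasoning, applied when $m = 1$, shows $\deg g$ cannot be positive: if $P = v\bar v g + 1$ with $g\in A$ nonconstant, then on $P = 0$ we'd need $v\bar v g = -1$, so both $v\bar v$ and $g$ are units in $B/PB = \C[t,t^{-1}]$, hence (being in the real form, with $(A/PA)^* = \R^*$ — wait, units of $\C[t,t^{-1}]$ that lie in the real form $A/PA$ need not be real, but $v\bar v$ and $g$ are self-conjugate) their product of a power of $t$ with a constant; I'd then compare degrees in $t$ or use that $v\bar v$ is self-conjugate together with $[v]\neq 1$ to conclude $\deg g = 0$. Once $m=1$ and $g \in \R^*$, we get $f = \bar v g$, so $fB = \bar v B$, completing the proof.
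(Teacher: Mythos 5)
Your first half coincides with the paper's: conjugating $P=v^mf+1\in A$ gives $v^mf=\bar v^m\bar f$; the hypothesis $[v]\ne 1$ puts $v\in\Delta(B)_1$, so $\gcd(v,\bar v)=1$ by Theorem~\ref{UFDUFD}; hence $f=\bar v^mg$ with $g=\bar g\in A$ and $P=(v\bar v)^mg+1$. From there you diverge, and the divergence contains a genuine gap. Your detour through Lemma~\ref{unit-degree} applied to $v\bar v$ only yields $m\le 2$ and, when $m=2$, that $g$ is constant; in the remaining case $m=1$ you still must show $g\in\R^*$, and here the proposal trails off into alternatives (``compare degrees in $t$'', ``use that $v\bar v$ is self-conjugate together with $[v]\ne1$''), none of which is carried out. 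Moreover, the route you lean on --- working inside $B/PB\cong\C[t,t^{-1}]$ --- is not available: the lemma assumes only that $P\in A$ is irreducible in $B$ with $(A/PA)^*=\R^*$; the identification of $B/PB$ with $\C[t,t^{-1}]$ belongs to the \emph{application} of the lemma, not to its hypotheses, so a proof of the lemma cannot invoke it.

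The fix is short and uses exactly the ingredient you name in your parenthetical but never exploit. Since $(v\bar v)^mg\equiv-1\pmod{P}$ and $-(v\bar v)^m\in A$, the image of $g$ is already a unit of $A/PA$; hypothesis (3) then gives $g=\lambda+PT$ with $\lambda\in\R^*$ and $T\in A$. Now $\deg P=m\deg(v\bar v)+\deg g>\deg g$, whereas $T\ne0$ would force $\deg g=\deg P+\deg T\ge\deg P$; hence $T=0$ and $g=\lambda$ for \emph{every} $m\ge1$, making your $m\le2$ reduction unnecessary. Then $P=(\zeta v\bar v)^m+1$ with $\zeta=\lambda^{1/m}$ factors over $\C$ unless $m=1$ --- this is the clean version of your ``$v\bar v$ takes finitely many values on $P=0$'' argument for $m=2$ --- and $fB=\bar vB$ follows. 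So your skeleton and your irreducibility idea are essentially the paper's, but as written the case $m=1$ with $g$ nonconstant is not excluded.
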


\begin{proof} We have:
\[
P\in A \implies v^mf\in A \implies v^mf=\bar{v}^m\bar{f}
\]
Since $v$ is irreducible and $[v]\ne 1$, we see that $v\in\Delta (B)_1$. By {\it Thm.\,\ref{UFDUFD}}, $\gcd (v,\bar{v})=1$. Since $v$ and $\bar{v}$ are prime, it follows that $f\in\bar{v}^mB$. 
Write $f=\bar{v}^mg$ for $g\in B'$. Then $v^m\bar{v}^mg=\bar{v}^mv^m\bar{g}$ implies $g=\bar{g}$ and $g\in A$. 

Let $\pi :A\to A/PA$ be the standard surjection. Since $P=(v\bar{v})^mg+1$, we see that $\pi (g)$ is a unit of $A/PA$. By hypothesis, there exists $\lambda\in\R^*$ and $T\in A$ with 
$g=\lambda +PT$. If $T\ne 0$, then $\deg P=m\deg (v\bar{v}) + \deg P +\deg T$, which is not possible, since $\deg (v\bar{v})>0$. Therefore, $T=0$ and 
$g=\lambda\in A^*$, so $fB=\bar{v}^mB$. 
Let $\zeta =\lambda^{1/m}\in\C^*$. Then 
$P=(\zeta v\bar{v})^m+1$. Since $P$ is irreducible in $B$, $m=1$. 
\end{proof}

\begin{lemma}\label{cusp} Let $\tilde{B}=\C [t,t^{-1}]$. Suppose that $f^a=g^b$ for $f,g\in\tilde{B}$ and $a,b\in\N$ relatively prime. Then there exists $h\in\tilde{B}$ such that 
$f=h^b$ and $g=h^a$.
\end{lemma}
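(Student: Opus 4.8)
The plan is to work in the Laurent polynomial ring $\tilde{B}=\C[t,t^{-1}]$, whose unit group is $\C^*\cdot t^{\Z}$ and which is a UFD (a localization of $\C[t]$). First I would reduce to the case where $f$ and $g$ are \emph{monic} in the sense of having unit-normalized leading structure: since every nonzero element of $\tilde{B}$ can be written as $c\,t^k\cdot p(t)$ with $c\in\C^*$, $k\in\Z$, and $p\in\C[t]$ a polynomial with $p(0)\ne 0$, the equation $f^a=g^b$ splits into three independent equations — one in $\C^*$ for the unit constants, one in $\Z$ for the $t$-exponents, and one in the polynomial part.

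Next I would dispatch each piece. For the exponents: if $f$ contributes $t^{k}$ and $g$ contributes $t^{\ell}$, then $ak=b\ell$, and since $\gcd(a,b)=1$ we get $b\mid k$, say $k=bm_0$, and then $\ell=am_0$, so the $t$-powers already have the desired form $t^{m_0}$ raised to the $b$-th and $a$-th powers. For the unit constant: if $f$ has constant factor $c$ and $g$ has $d$, then $c^a=d^b$; choosing an $a$-th root and using $\gcd(a,b)=1$ one finds a common $\zeta\in\C^*$ with $c=\zeta^b$, $d=\zeta^a$ (this is the standard fact that in a divisible/uniquely-divisible group $\C^*$, $c^a=d^b$ with coprime exponents forces a common root — write $1 = \alpha a + \beta b$ and set $\zeta = c^{\beta} d^{\alpha}$, then check $\zeta^b = c^{\beta b} d^{\alpha b} = c^{\beta b} d^{\alpha b}$ and use $d^b = c^a$ to simplify). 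For the genuine polynomial part $p^a = q^b$ in $\C[t]$ with $p(0)q(0)\ne 0$: factor into irreducibles using that $\C[t]$ is a UFD; each irreducible $\pi$ occurring in $p$ with multiplicity $e$ occurs in $p^a$ with multiplicity $ae$, hence in $q^b$ with multiplicity $ae$, so $b\mid ae$ and thus $b\mid e$; writing $e = b e'$ and collecting, $p = u_0\prod \pi_j^{b e'_j}$ for a unit $u_0$, so $p$ is a $b$-th power in $\C[t]$ up to a unit, and symmetrically $q$ is an $a$-th power up to a unit; matching shows $p = h_0^b$ and $q = h_0^a$ for a suitable $h_0\in\C[t]$ after absorbing units.

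Finally I would reassemble: set $h = \zeta\,t^{m_0}\,h_0 \in \tilde{B}$, where $\zeta$ handles the constant, $t^{m_0}$ the exponent, and $h_0$ the polynomial part, and verify directly that $h^b = f$ and $h^a = g$ by multiplying the three factorizations back together. The bookkeeping about which unit goes where has to be done consistently, but it is routine once the three components are separated.

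The main obstacle, such as it is, is purely organizational: the statement is essentially the assertion that $\tilde{B}$ is a UFD whose unit group is a uniquely divisible abelian group times a free abelian group, and the proof is just ``$n$-th roots exist and are essentially unique'' carried out factor by factor. The one genuinely non-formal input is the coprimality of $a$ and $b$ via Bézout, used both to pass from $ae$ being a multiple of $b$ to $e$ being a multiple of $b$, and to produce the common root $\zeta$ of the unit constants; I expect that to be the only place where the hypothesis $\gcd(a,b)=1$ is actually needed, and it is where I would be most careful to get the exponents right.
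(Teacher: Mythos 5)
Your proof is correct, and it has the same two-part skeleton as the paper's: first show that $f$ and $g$ are a $b$-th and an $a$-th power up to units, then use $\gcd(a,b)=1$ and B\'ezout to split the unit $\C^*\cdot t^{\Z}$ into a compatible $b$-th/$a$-th root. Where you differ is in the first part: you factor $f$ and $g$ into irreducibles of $\C[t]$ and count multiplicities ($b\mid ae$ and $\gcd(a,b)=1$ force $b\mid e$), whereas the paper avoids explicit prime factorizations and instead runs a subtractive-Euclidean induction on the exponents --- writing $f=dF$, $g=dG$ with $\gcd(F,G)=1$, deducing $F^a\tilde{B}=d^{\,b-a}\tilde{B}$, and recursing on the pair $(b-a,a)$ until it reaches the base case $a=1$ or $b=1$. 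Your multiplicity count is the more standard and arguably more transparent route; the paper's descent is self-contained at the level of ideals and mirrors the Euclidean algorithm on $(a,b)$, which is where the coprimality enters for it. The unit-handling step is essentially identical in both (the $t$-exponent bookkeeping $ak=b\ell\Rightarrow b\mid k$, $a\mid\ell$, and the common root $\zeta=c^{\beta}d^{\alpha}$ from $\alpha a+\beta b=1$). One small point of care in your write-up: for the ``three independent equations'' to really be independent you must fix a normalization of the polynomial part (e.g.\ monic with nonzero constant term), as you gesture at; once that is pinned down the reassembly $h=\zeta\,t^{m_0}h_0$ goes through exactly as you say.
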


\begin{proof} If $a=1$ or $b=1$, this is clear, so we may assume that $b>a>1$. We first show that, for some $h\in\tilde{B}$:
\begin{equation}\label{equation1}
f^a\tilde{B}=g^b\tilde{B} \implies f\tilde{B}=h^b\tilde{B} \quad {\rm and}\quad g\tilde{B}=h^a\tilde{B}
\end{equation}
Let $f=dF$ and $g=dG$, where $d,F,G\in\tilde{B}$ and $\gcd (F,G)=1$. Then $F^a\tilde{B}=d^{b-a}G^b\tilde{B}$, and since $\gcd (F,G)=1$, we must have $G\in\tilde{B}^*$. 
Therefore, $F^a\tilde{B}=d^{b-a}\tilde{B}$.
By induction, we conclude that $F\tilde{B}=h^{b-a}\tilde{B}$ and $d\tilde{B}=h^a\tilde{B}$ for some $h\in\tilde{B}$. It follows that $f\tilde{B}=(d\tilde{B})(F\tilde{B})=(h^a\tilde{B})(h^{b-a}\tilde{B})=h^b\tilde{B}$. So the implication (\ref{equation1}) is proved.

Let $\omega ,\zeta\in\C^*$ and $m,n\in\Z$ be such that $f=\omega t^mh^b$ and $g=\zeta t^nh^a$. Then:
\[ 
f^a=g^b \implies am=bn \quad {\rm and}\quad \omega^a=\zeta^b \implies a\,\vert\, n \quad {\rm and}\quad b\,\vert\, m 
\]
Define $k=m/b=n/a$, and let $\lambda\in \C^*$ be such that $\omega =\lambda^b$ and $\zeta=\lambda^a$. If $H=\lambda t^kh$, then $f=H^b$ and $g=H^a$.
\end{proof}


\section{Main Result}

Let $A=\R [x,y]\cong\R^{[2]}$ and $B=\C\otimes_{\R}A=\C [x,y]\cong\C^{[2]}$. The goal of this section is to prove the following. 

\begin{theorem}\label{main} Define $Q\in A$ by $Q=x^2+y^2-1$. Given $P\in A$, if $A/PA\cong_{\R}A/QA$, then there exist $f,g\in A$ such that 
$A=\R [f,g]$ and $P=f^2+g^2-1$.
\end{theorem}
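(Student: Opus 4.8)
The plan is to reduce the real statement to the known classification of closed $\C^*$-embeddings in $\C^2$, then use the polar group $\Pi(A)$ and the preliminary lemmas to pin down the shape of $P$. First I would observe that, by the lemma above, $A/PA\cong_{\R}A/QA$ forces $B/PB\cong_{\C}\C[t,t^{-1}]$, so $P=0$ is a closed $\C^*$-embedding in $\C^2$ and $P$ is irreducible in $B$ (since $\C[t,t^{-1}]$ is a domain). Moreover $(A/PA)^*$ must equal $(A/QA)^*=\R^*$, because the real points of $\Sp^1$ carry no nonconstant invertible regular functions; this verifies hypothesis (3) of \emph{Lemma\,\ref{P-form}} and \emph{Lemma\,\ref{unit-degree}}. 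Next I would invoke the Cassou-Nogues--Koras--Palka--Russell classification: up to an automorphism of $\C^2$, the embedding $P=0$ is equivalent to one on their list, and each such representative has a concrete normal form $P=v^mf+1$ with $B=\C[u,v]$ for suitable $u,v$, where $v$ is (a power of) one of the two "coordinate-like" factors. The point is that for a genuine $\C^*$-embedding the class $[v]\in\Pi(A)$ is nontrivial — this is exactly where the hypothesis that $P$ admits a \emph{real} form enters, since if all the relevant classes in $\Pi(A)$ were trivial the embedding would have to be the standard one $xy=1$.

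With these inputs in place, I would apply \emph{Lemma\,\ref{P-form}} to conclude $m=1$ and $fB=\bar{v}B$, so that after rescaling $P=\zeta v\bar v+1$ for some $\zeta\in\C^*$; since $v\bar v\in A$ and $P\in A$ we get $\zeta\in\R^*$, and in fact $\zeta<0$ by looking at real points (the real zero set must be nonempty and compact-like, matching $\Sp^1$), so $P=1-cv\bar v$ with $c>0$. Writing $v=p+iq$ with $p,q\in A$ gives $v\bar v=p^2+q^2$, hence $P=1-c(p^2+q^2)=(\sqrt c\,p)^2+(\sqrt c\,q)^2$ rearranged — more precisely $P=-(f^2+g^2-1)$ where $f=\sqrt c\,p$, $g=\sqrt c\,q$, and $P$ and $-P$ define the same embedding. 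It remains to check that $A=\R[f,g]$: we know $B=\C[v,\bar v']$-type generation, and $f,g\in A$ with $B=\C[f,g]$ after the linear change mixing $p,q$; then \emph{Lemma\,\ref{AMS}} (the real Abhyankar-Moh-Suzuki input) upgrades this to $A=\R[f,g]$, possibly after first producing one coordinate $\alpha=f\in A$ with $B=\C[f,u]$ and then applying the lemma.

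The genuinely hard part, and the place where the bulk of the argument lives, is extracting from the CNPR classification the precise normal form $P=v^mf+1$ with $B=\C[u,v]$ and $[v]\ne 1$ in $\Pi(A)$ — i.e.\ translating their list of $\C^*$-embeddings (stated in terms of sequences of blow-ups, or of parametrizations $t\mapsto(p(t),q(t))$) into the algebraic hypothesis that \emph{Lemma\,\ref{P-form}} consumes, and showing that any embedding admitting two distinct real forms, equivalently any embedding for which the polar-group obstruction is nontrivial in the required way, collapses to $xy=1$. I expect this to require a case-by-case inspection of the finitely many families in the classification, checking in each case whether a real form exists and, when it does, computing $[v]\in\Pi(A)$; \emph{Lemma\,\ref{cusp}} is presumably the tool that handles the "cuspidal" families where $f$ and $g$ satisfy a relation $f^a=g^b$ up to units, forcing them to be powers of a common element and thereby constraining which members of the family can be defined over $\R$. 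Once the classification is reduced to the standard embedding plus the single surviving family, the computation with \emph{Lemma\,\ref{P-form}} and \emph{Lemma\,\ref{AMS}} finishes the proof as sketched above.
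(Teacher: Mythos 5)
Your high-level strategy matches the paper's: pass to $B/PB\cong\C[t,t^{-1}]$, note $(A/PA)^*=\R^*$, invoke the Cassou-Nogues--Koras--Palka--Russell classification, and use the polar group together with \emph{Lemma\,\ref{P-form}} and \emph{Lemma\,\ref{AMS}} to identify the surviving case with the standard embedding. The endgame you sketch for that surviving case is essentially the paper's argument for form (i.1): $[v]\ne 1$, \emph{Lemma\,\ref{P-form}} gives $P=\lambda v\bar v-1$ with $\lambda\in\R^*$, a degree count forces $b=k=1$, and then $B=\C[v_1,v_2]$ plus \emph{Lemma\,\ref{AMS}} gives $A=\R[v_1,v_2]$. (Two small corrections there: the constant term in form (i.1) is $-1$, so no sign juggling between $P$ and $-P$ is needed; and the triviality of $[v]$ does not ``collapse to $xy=1$'' --- in the paper it produces a nonconstant unit in $A/PA$ and is thereby excluded outright.)

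The genuine gap is in what you call the hard part. You assert that every representative in the classification can be put in the shape $P=v^mf+1$ consumed by \emph{Lemma\,\ref{P-form}}, and that the work consists of translating the classification into that shape and computing $[v]$ case by case. That is not how the elimination goes, and it would not go that way: the good-asymptote forms (ii.1)--(ii.5) and the sporadic forms (iii.1)--(iii.2) are not of that shape, and \emph{Lemma\,\ref{P-form}} plays no role in ruling them out. The actual mechanism is different: in each of those cases one exhibits an explicit element (e.g.\ $F$, $1+u$, $uF+4$, or the conjugate relation $\lambda\bar F=G+PL$ coming from \emph{Lemma\,\ref{trivial-units}}) that must be a unit modulo $P$, and then \emph{Lemma\,\ref{unit-degree}} or \emph{Lemma\,\ref{deg}} yields a degree inequality such as $\deg P\le 2\deg F$ that contradicts the explicit degree of the normal form. \emph{Lemma\,\ref{cusp}} is likewise not used to decide which family members are defined over $\R$; it is used inside form (ii.1) to extract a root $h$ with $F\equiv h^p$ modulo $P$ so that $F$ is seen to be a unit mod $P$. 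Since these seven families constitute the bulk of the proof and your proposed uniform reduction to \emph{Lemma\,\ref{P-form}} fails for all of them, the case analysis remains undone and cannot be completed along the route you describe.
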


The proof of this theorem is based on the classification of closed $\C^*$-embeddings in $\C^2$ found in 
\cite{Cassou-Nogues.Koras.Russell.09, Koras.Palka.Russell.16,Koras.Palka.ppt}. 
We show that, for almost every $\C^*$-embedding in their classification, the induced real form of the representative polynomial embedding is an embedding of 
$\R^*$ in $\R^2$. 
There is only one exceptional case, and in this case, the induced embedding of $\Sp^1$ in $\R^2$ is equivalent to the standard embedding. An important  distinction between $\R^*$ and $\Sp^1$ is that the coordinate ring of $\R^*$ has nontrivial units, whereas the units of the coordinate ring of $\Sp^1$ are trivial.

The authors of \cite{Cassou-Nogues.Koras.Russell.09, Koras.Palka.Russell.16,Koras.Palka.ppt} distinguish three types of closed $\C^*$-embeddings in $\C^2$: Those with a very good asymptote, those with a good asymptote, and the sporadic embeddings. These three cases are dealt with in 
{\it Prop.\,\ref{prop1}},  {\it Prop.\,\ref{prop2}}  and {\it Prop.\,\ref{prop3}}, respectively. 


\subsection{One Very Good Asymptote} 

See \cite{Cassou-Nogues.Koras.Russell.09}, Thm.\,8.2\,(i).

\begin{proposition}\label{prop1}
Suppose that $B=\C [u,v]$ and that $P\in B'$ is of one of the following two forms. 
\begin{itemize}
\item [(i.1)] $P(u,v)=v^a-(uv^k+g(v))^b$, 
with $a,b\ge1$ and $\gcd(a,b)=1$, $k\ge 1$, $g(0)=1$ and $g$ otherwise arbitrary of degree at most $k-1$. 
\medskip
\item [(i.2)] $P(u,v)=1-v^{b-a}(uv^{k-1}+g(v))^b$, 
with $b>a\ge 1$, $\gcd(a,b)=1$, $k\ge 1$, $g$ arbitrary of degree at most $k-2$.
\end{itemize} 
\medskip
If $P\in A$ and $(A/PA)^*=\R^*$, then $P$ is of form {\rm (i.1)} with $b=k=1$, and $P$ defines the standard embedding of $\Sp^1$ in $\R^2$. 
\end{proposition}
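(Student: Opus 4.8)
The plan is to treat the two displayed forms uniformly for as long as possible, apply {\it Lemma\,\ref{P-form}}, and then split. First note that in each case $P$ is irreducible in $B$ (this follows directly from the given shape together with $\gcd(a,b)=1$), that $v$ -- and hence also $\bar v$ -- is a coordinate of $B$, hence prime, and that, using $g(0)=1$, the polynomial can be put into the form required by {\it Lemma\,\ref{P-form}}. In case (i.2) this is immediate: $P = v^{b-a}\bigl(-(uv^{k-1}+g(v))^b\bigr)+1$, with $m=b-a\ge 1$. In case (i.1), write $uv^k+g(v)=1+v\psi$ where $\psi = uv^{k-1}+\frac{g(v)-1}{v}$, expand the binomial power as $(1+v\psi)^b = 1+v\Psi$, and obtain $-P = v(\Psi-v^{a-1})+1$; here $m=1$ and $F:=\Psi-v^{a-1}$. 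In both cases $P$ -- respectively $-P$ -- is congruent to $1$ modulo $vB$, so the image of $v$ in $B/PB$ is a unit.

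Next I would verify the remaining hypothesis $[v]\neq 1$ of {\it Lemma\,\ref{P-form}}. If $[v]=1$, then $\bar v = cv$ for some $c\in\C^*$ with $|c|=1$; choosing $\mu\in\C^*$ with $\mu/\bar\mu = c$, the element $v':=\mu v$ lies in $A$, it is a unit of $A/PA$ (because $v$, hence $v'$, is a unit modulo $P$ and $A/PA$ is a real form of $B/PB$), and its image in $A/PA$ does not lie in $\R^* = A^*$ (since $v$ has positive degree while $P$ is not of degree one) -- contradicting $(A/PA)^* = \R^*$. So {\it Lemma\,\ref{P-form}} applies and yields $m=1$ together with $fB=\bar vB$ in case (i.2), respectively $FB=\bar vB$ in case (i.1).

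Case (i.2) is then excluded at once: $m=1$ gives $b-a=1$, while $fB=\bar vB$ reads $\bigl(uv^{k-1}+g(v)\bigr)^b B = \bar v B$; since $\bar v$ is prime and $uv^{k-1}+g(v)$ is a nonzero non-unit, comparing numbers of prime factors forces $b=1$, whence $a=b-1=0$, contradicting $a\ge 1$. For case (i.1), from $FB=\bar vB$ write $F=\zeta\bar v$ with $\zeta\in\C^*$; since $-P=vF+1=\zeta v\bar v+1$ lies in $A$, comparison with its conjugate gives $\zeta\in\R^*$. Now regard $F=\Psi-v^{a-1}$ as a polynomial in $u$ with coefficients in $\C[v]$: a direct computation gives $\deg_u F = b$ with leading coefficient $v^{bk-1}$. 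But $\bar v = \zeta^{-1}F$ is a coordinate of $B=\C[u,v]$, and any coordinate of $B$, regarded as a polynomial in $u$ over $\C[v]$, has leading coefficient in $\C^*$; hence $bk-1=0$, i.e.\ $b=k=1$. With $b=k=1$ we have $g=1$ and $F=u-v^{a-1}$; setting $w=u-v^{a-1}$ (so $B=\C[v,w]$) gives $-P=vw+1=\zeta v\bar v+1$. Writing $v=v_1+iv_2$ with $v_1,v_2\in A$, we get $B=\C[v_1,v_2]$, hence $A=\R[v_1,v_2]$ by {\it Lemma\,\ref{AMS}}, and $-P=\zeta(v_1^2+v_2^2)+1$. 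Since $\{P=0\}$ has a real point -- which holds in the setting of {\it Theorem\,\ref{main}}, where $A/PA\cong_{\R}A/QA$ -- we conclude $\zeta<0$, and then $f:=\sqrt{-\zeta}\,v_1$, $g:=\sqrt{-\zeta}\,v_2$ satisfy $A=\R[f,g]$ and $P=f^2+g^2-1$, the standard embedding of $\Sp^1$.

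The step I expect to be the real obstacle is the claim that a coordinate of $B=\C[u,v]$, viewed as a polynomial in $u$, has unit leading coefficient: that is where the structure theory of polynomial automorphisms of $\A^2_{\C}$ (Abhyankar--Moh, van der Kulk) enters, the rest being bookkeeping on top of the preliminary lemmas. A secondary point is the sign of $\zeta$ in case (i.1): the hypothesis $(A/PA)^* = \R^*$ alone does not exclude the pointless conic $v_1^2+v_2^2 = -1/\zeta$ with $\zeta>0$ (whose coordinate ring also has only trivial units), so one genuinely needs the presence of a real point on $\{P=0\}$ to force $\zeta<0$.
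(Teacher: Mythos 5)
Your argument is correct and follows the paper's overall strategy --- establish $[v]\ne 1$ in $\Pi(A)$, apply \emph{Lemma~\ref{P-form}}, eliminate form (i.2) by unique factorization, and in form (i.1) force $b=k=1$ and finish with \emph{Lemma~\ref{AMS}} --- but it diverges at two points. For $[v]\ne 1$ you rescale $v$ to a real element $\mu v$ and exhibit it directly as a nonconstant unit of $A/PA$, whereas the paper first normalizes coordinates via \emph{Lemma~\ref{AMS}} and reads off a nonconstant unit; both work, and yours is slightly more economical. The substantive divergence is the step $b=k=1$: you compute that $F=\zeta\bar v$ has $u$-degree $b$ with leading coefficient $v^{bk-1}$ and appeal to the fact that a coordinate of $\C[u,v]$ of positive $u$-degree has unit leading $u$-coefficient. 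That fact is true (if the leading coefficient vanished at $v_0$, the scheme-theoretic fibre of $v$ over $v_0$ on the curve $\{F=0\}\cong\A^1$ would be $\C[u]/(F(u,v_0))$ of length $\deg_u F(u,v_0)<\deg_u F$, whereas $v$ restricts on $\A^1$ to a polynomial of degree $[\C(F{=}0):\C(v)]=\deg_u F$, all of whose fibres have length $\deg_u F$), but you leave it as an acknowledged external input; the paper avoids it entirely by comparing total degrees: $-P=\zeta v\bar v+1$ together with \emph{Lemma~\ref{deg}} gives $\deg P=2\deg v$, while form (i.1) gives $\deg P\ge \deg u+bk\deg v>bk\deg v$, whence $bk\le 1$. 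You should either supply a proof of the coordinate fact or switch to the degree comparison, which uses only the stated lemmas. Two smaller remarks: irreducibility of $P$ in $B$ does not follow ``directly from the given shape'' (irreducibility in the subring $\C[v,uv^k+g]$ is not irreducibility in $B$); it is part of the classification being quoted, since these are defining polynomials of closed $\C^*$-embeddings. Finally, your point about the sign of $\zeta$ is a genuine catch that the paper glosses over: $P=-(x^2+y^2)-1$ is of form (i.1) with $v=x+iy$, $u=1+x-iy$, $a=b=k=1$, lies in $A$, and satisfies $(A/PA)^*=\R^*$, yet defines the empty real conic; the real point supplied by $A/PA\cong_{\R}A/QA$ in \emph{Theorem~\ref{main}} is indeed needed to conclude that $P$ defines the standard embedding of $\Sp^1$.
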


\begin{proof} First consider the case $[v]=1$ in the polar group $\Pi (A)$. In this case, $v=\omega\alpha$ for $\omega\in\C^*$ and $\alpha\in A'$. 
By {\it Prop.\,\ref{AMS}}, there exists $\beta\in A$ such that $A=\R [\alpha ,\beta]$. So we may assume that $x=\beta$ and $y=\alpha$. 
Since $B=\C [u,y]=\C [x,y]$, it follows that $u=\lambda x + \mu (y)$ for $\lambda\in\C^*$ and $\mu (y)\in\C [y]$. Therefore, form (i.1) becomes
\[
P(u,v)=P(\lambda x+\mu (y), \omega y) = ry^a-(sxy^k+h(y))^b \quad (r,s\in\R^*\, ,\,\, h\in\R[y])
\]
and form (i.2) becomes:
\[
P(u,v)=P(\lambda x+\mu (y), \omega y)=1-ry^{b-a}(sxy^{k-1}+h(y))^b \quad (r,s\in\R^*\, ,\,\, h\in\R[y])
\]
Since $P\in A$, we see that, in each case, the image of $y$ in $A/PA$ is a non-constant invertible function, meaning that $(A/PA)^*\ne\R^*$. 
Therefore, $[v]\ne 1$.

Consider form (i.2). By {\it Lemma\,\ref{P-form}}, we must have $b-a=1$ and $b=1$, which gives a contradiction. 
Therefore, $P(u,v)$ cannot be of form (i.2).

Consider form (i.1). Write $P=vF-1$ for $F\in B'$. By {\it Lemma\,\ref{P-form}}, $FB=\bar{v}B$. If $F=\lambda\bar{v}$ for $\lambda\in\C^*$, 
and if $v=v_1+iv_2$ for $v_1,v_2\in A$, then:
\[
P=\lambda v\bar{v}-1=\lambda (v_1^2+v_2^2)-1\in A \implies \lambda\in\R^*
\]
By {\it Lemma\,\ref{deg}}, it follows that:
\[
2\deg v=\deg P=\max\{ a\deg v,\deg u+bk\deg v\} \implies 2\deg v>bk\deg v \implies b=k=1
\]
We have thus have:
\[
P=v^a-uv-1=v(v^{a-1}-u)-1 \implies F=v^{a-1}-u \implies (u-v^{a-1})B=\bar{v}B
\]
Therefore:
\[
B=\C [u,v]=\C [u-v^{a-1},v]=\C [\bar{v},v] = \C [v_1,v_2]
\]
By {\it Prop.\,\ref{AMS}}, $A=\R [v_1,v_2]$. 
\end{proof}


\subsection{One Good Asymptote} 

See \cite{Cassou-Nogues.Koras.Russell.09}, Thm.\,8.2\,(ii).

\begin{proposition}\label{prop2}
Suppose that $B=\C [u,v]$ and that $P\in B'$ is of one of the following five forms. 
\begin{itemize}
\item [(ii.1)] $v^kP=(v+F^s)^p-F^{sp+1}$ and $F=uv^k+g(v)$, 
where $s,p,k\ge 1$; $g$ is a polynomial of degree at most $k-1$ uniquely determined by the condition that $g$ is a polynomial and $g(0)=1$.
\medskip
\item [(ii.2)] $v^kP=(v+F^s)^p-F^{sp-1}$ and $F=uv^k+g(v)$, 
where $s,p,k\ge 1$, $sp\ge 2$; $g$ is a polynomial of degree at most $k-1$ uniquely determined by the condition that $g$ is a polynomial and $g(0)=1$.
\medskip
\item [(ii.3)] $v^kP=v-16v^2+4vF-8vF^2+F^3-F^4$ for $F=uv^k+g(v)$, where $k\ge 1$, and $g$ is a polynomial of degree at most $k-1$ uniquely determined by the condition that $g$ is a polynomial and $g(0)=1$.
\medskip
\item [(ii.4)] $v^{k-1}P=(1+vF^{s+1})^pF-1$ for $F=uv^{k-1}+g(v)$, where $s,p,k\ge 1$, and $g$ is a polynomial of degree at most $k-2$ uniquely determined by the condition that $g$ is a polynomial.
\medskip
\item [(ii.5)] $v^{k-1}P=(1+vF^{s+1})^p-F$ for $F=uv^{k-1}+g(v)$, where $s,p,k\ge 1$, and $g$ is a polynomial of degree at most $k-2$ uniquely determined by the condition that $g$ is a polynomial.
\end{itemize} 
\medskip
If $P\in A$, then $(A/PA)^*\ne\R^*$. 
\end{proposition}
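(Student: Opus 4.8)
The plan is to show that in each of the five forms (ii.1)--(ii.5), either the hypotheses force a contradiction (so the form cannot occur with $P\in A$ at all), or the relation $v^kP=(\dots)$, resp. $v^{k-1}P=(\dots)$, forces the image of some element of $A$ in $A/PA$ to be a nonconstant unit. The crucial structural observation is the same in all cases: modulo $P$, the ideal $(v^k)$ (or $(v^{k-1})$) contains a unit, because the right-hand side of each defining relation, read modulo $P$, is a product of powers of $v$ (and $F$, which itself lies in $vB$ up to $g(v)$) times a unit-valued polynomial with nonzero constant term. So the first step is to dispose of the case $[v]=1$ in $\Pi(A)$ exactly as in \textit{Prop.\,\ref{prop1}}: if $v=\omega\alpha$ with $\alpha\in A'$, then by \textit{Lemma\,\ref{AMS}} we may take $y=\alpha$, write $u=\lambda x+\mu(y)$, substitute, and observe that in each of (ii.1)--(ii.5) the polynomial relation then exhibits a power of $y$ (hence $y$ itself, up to a unit) as invertible modulo $P$; since $y\notin\R$ this gives $(A/PA)^*\ne\R^*$ as desired. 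Thus we may assume $[v]\ne 1$, so $v$ is prime in $B$, $v\in\Delta(B)_1$, and $\gcd(v,\bar v)=1$ by \textit{Thm.\,\ref{UFDUFD}}.

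Next, assuming $[v]\ne 1$, I would use the reality condition $P\in A$ together with the defining relation to pin down $v^k$ (or $v^{k-1}$) up to associates. Write the relation schematically as $v^kP=\Phi(F,v)$ where $\Phi$ has nonzero constant term (this is exactly why $g(0)=1$ is imposed in (ii.1)--(ii.3), and why the normalization of $g$ is chosen in (ii.4)--(ii.5)): then $\Phi(F,v)\equiv \Phi(0,0)\ne 0 \pmod{(v)}$ after reducing $F\equiv g(v)$, so $\Phi(F,v)$ is a unit modulo the prime $v$, whence $v$ does not divide $v^kP$ — forcing $k=1$ (resp. $k-1=0$, i.e. $k=1$) in those forms, or else a contradiction. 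With $k=1$ the relation becomes $vP=\Phi(F,v)$ (resp. $P=\Phi(F,v)$ with $F=u+g(v)$, $g$ of degree $\le -1$, i.e. $F=u$) and now $P\in A$ forces $\Phi(F,v)=\overline{\Phi(F,v)}=\Phi(\bar F,\bar v)$ after clearing $v$; comparing the factorizations and using $\gcd(v,\bar v)=1$ as in the proof of \textit{Lemma\,\ref{P-form}} pins $F$ (and hence the single interesting factor) to be an associate of $\bar v$. At that point $P$ has the shape $v\cdot(\text{unit}\cdot\bar v)^{(\cdots)}\pm(\cdots)$ with a nonzero constant term, so again the image of $v$ in $A/PA$ is a unit; writing $v=v_1+iv_2$, the real polynomial $v_1^2+v_2^2$ is then a nonconstant unit of $A/PA$ (it cannot be constant because $\deg(v\bar v)>0$ and $\deg$-counting via \textit{Lemma\,\ref{deg}} rules it out), giving $(A/PA)^*\ne\R^*$.

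The main obstacle is the bookkeeping in the cases where $\Phi$ is a genuine polynomial in $F$ of degree $\ge 2$ — namely (ii.1), (ii.2), (ii.3), and the $(1+vF^{s+1})^p$ factor in (ii.4), (ii.5) — where, after reducing modulo $v$, one must check that the constant term really is nonzero and that no unintended cancellation occurs; for (ii.3) this is the explicit computation $v-16v^2+4vF-8vF^2+F^3-F^4 \equiv g(0)^3 - g(0)^4 \pmod v$, which is nonzero precisely when $g(0)\notin\{0,1\}$ — and since $g(0)=1$ is forced, this case needs a slightly different argument: here $\Phi(0,0)=0$, so instead I would factor out the largest power of $v$ and of $F$ that divides $\Phi$ and track what is left, or alternatively argue directly that after the reality reduction the resulting $P$ cannot simultaneously be irreducible in $B$ (which it must be, being equivalent to $Q$) and satisfy the divisibility constraints — I expect (ii.3) to be the fussiest subcase and to require treating it by a short ad hoc degree/irreducibility argument rather than the uniform scheme above. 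Everything else is a matter of carefully running the $\gcd(v,\bar v)=1$ argument of \textit{Lemma\,\ref{P-form}} through each relation; the key input is always that $P\in A$ converts the one-variable asymptotic normal form into a palindromic condition under conjugation, and coprimality of $v$ and $\bar v$ then forces the invertible factor to be $\bar v$ itself, producing the nonconstant unit.
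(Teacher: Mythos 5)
Your central structural claim is false, and the argument built on it cannot be repaired without changing the approach entirely. You assert that in each of (ii.1)--(ii.5) the right-hand side of the defining relation, read modulo $v$, has nonzero constant term, so that $v$ (or $v^k$) becomes a unit modulo $P$ and $v\bar v$ then furnishes a nonconstant real unit of $A/PA$. But $F\equiv g(0)=1\pmod v$, so for (ii.1) the right-hand side is $(0+1)^p-1^{sp+1}=0\pmod v$, and likewise for (ii.2) and (ii.3); the whole point of the normalization of $g$ is that $v^k$ \emph{does} divide the right-hand side (that is what makes $P$ a polynomial). A direct expansion shows, e.g., that for (ii.1) with $k=1$ one has $P(u,0)=p-u$, a nonconstant polynomial in $u$, so $v$ is \emph{not} invertible in $B/PB$. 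This is precisely what distinguishes the ``good asymptote'' forms here from the ``very good asymptote'' forms of \emph{Prop.\,\ref{prop1}}, where $P\equiv -1\pmod v$ and your mechanism would apply. Your subsidiary deductions also fail: ``$v\nmid v^kP$, forcing $k=1$'' is a non sequitur (it would force $k=0$, and in any case the premise is false), and conjugation cannot pin $F$ to an associate of $\bar v$ on degree grounds, since $\deg F=\deg u+k\deg v>\deg v=\deg\bar v$.

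The paper's proof runs on entirely different machinery, none of which appears in your proposal. One assumes $(A/PA)^*=\R^*$ and identifies, in each form, an element that is a unit modulo $P$ but not a unit of $B$: the element $F$ itself in (ii.4) and (ii.5) (read off directly from the relation), an element $h$ with $h^p\equiv F$ produced by \emph{Lemma\,\ref{cusp}} applied in $B/PB\cong\C[t,t^{-1}]$ in (ii.1) and (ii.2), the element $uF+4$ in (ii.3), and $1+u$ in the surviving subcase $s=1$, $p=2$, $k=1$ of (ii.2). Then \emph{Lemma\,\ref{unit-degree}} gives the bound $\deg P\le 2\deg(\cdot)$, which either yields an immediate contradiction or pins down the exponents ($s=p=1$, resp.\ $sp=2$), and \emph{Lemma\,\ref{trivial-units}} forces the inverse of such a unit to be $\lambda$ times its conjugate, which is then incompatible with the degree bookkeeping (e.g.\ $\deg P>\deg\bar F>\deg G$ precludes $\lambda\bar F=G+PL$). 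Without the cusp lemma, the $\deg P\le 2\deg f$ bound, and the conjugate-inverse relation, the cases do not close; you would need to discard the $v$-is-a-unit premise and rebuild along these lines.
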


\begin{proof} Assume, to the contrary, that $(A/PA)^*=\R^*$. Let $G\in B$ be such that $F=vG+1$. 
\medskip

\noindent {\it Form (ii.1).} 
By {\it Lemma\,\ref{cusp}}, there exists $h\in B/PB$ such that $F\equiv h^p$ and $v+F^s\equiv h^{sp+1}$ modulo $P$. 
Therefore:
\[
v\equiv h^{sp}(h-1) \implies 
h^p\equiv vG+1\equiv h^{sp}(h-1)G+1 \implies h^p(1-h^{(s-1)p}(h-1)G)\equiv 1
\]
Consequently, $h$ is a unit modulo $P$, which implies that $F$ is a unit modulo $P$. Since $F\not\in B^*=\C^*$, {\it Lemma\,\ref{unit-degree}} implies:
\[
2\deg F \ge \deg P=sp\deg F + \deg u > sp\deg F \implies s=p=1
\]
We thus have:
\[
v^kP=v+F-F^2=v-vFG \implies v^{k-1}P=1-FG
\]
By {\it Lemma\,\ref{trivial-units}}, $\lambda\bar{F}=G+PL$ for some $L\in B$. However, the equalities $\deg P=\deg F+\deg u$, $\deg \bar{F}=\deg F$ and $\deg G=\deg F-\deg v$ imply $\deg P>\deg \bar{F}>\deg G$, thus precluding the existence of the equation
$\lambda\bar{F}=G+PL$. Therefore, $P$ cannot be of form (ii.1).
\medskip

\noindent {\it Form (ii.2).} Reasoning as in the case of form (ii.1), we find that $2\deg F\ge \deg P$. In this case, $\deg P=(sp-1)\deg F+\deg u$, and it follows that $sp=2$. 

Assume that $s=2$ and $p=1$. Then $v^kP=v+F(F-1)$ implies that $v^{k-1}P=1+FG$. By {\it Lemma\,\ref{trivial-units}}, $\lambda\bar{F}=G+PN$ for some $N\in B$. 
However, the equalities $\deg P=\deg F+\deg u$, $\deg \bar{F}=\deg F$ and $\deg G=\deg F-\deg v$ imply $\deg P>\deg \bar{F}>\deg G$, thus precluding the existence of the equation
$\lambda\bar{F}=G+PN$. Therefore, $s=1$ and $p=2$. 

If $k\ge 2$, then $G=vH-2$ for some $H\in B$, and:
\[
v^kP=(v+F)^2-F \implies v^{k-1}P=v+2F+F(vH-2) \implies v^{k-2}P=1+FH
\]
By {\it Lemma\,\ref{trivial-units}}, there exist $\lambda\in\R^*$ and $M\in B$ with $\lambda\bar{F}=H+PM$. 
However, the equalities $\deg P=\deg F+\deg u$, $\deg \bar{F}=\deg F$ and $\deg H=\deg F-2\deg v$ imply $\deg P>\deg \bar{F}>\deg H$, thus precluding the existence of the equation
$\lambda\bar{F}=H+PM$. Therefore, $k=1$. 

We have $vP=(v+F)^2-F$ and $F=uv+1$, which means that $P=v+2F+uF$. 
Define $h\in B$ by $h=v+F$. Then $P=h+(1+u)F$. Modulo $P$, we have:
\[
h(1+u)\equiv h+uh\equiv h+uv+uF\equiv h+(F-1)-v-2F\equiv h-v-F-1\equiv 1
\]
Therefore, $1+u$ is a unit modulo $P$. By {\it Lemma\,\ref{unit-degree}}, 
\[
2\deg u+\deg v=\deg P\le 2\deg (1+u)=2\deg u
\]
a contradiction. Therefore, $P$ cannot be of form (ii.2). 
\medskip

\noindent{\it Form (ii.3).} If $H=v^{-1}(F^2+4v-F)=uF+4$, then
\[
vH(F^2+4v)=(F^2+4v-F)(F^2+4v)=F^4-F^3+8vF^2-4vF+16v^2=v-v^kP 
\]
which implies:
\[
H(F^2+4v)=1-v^{k-1}P
\]
Therefore, $H$ is a unit modulo $P$. Since $H\not\in B^*=\C^*$, {\it Lemma\,\ref{unit-degree}} implies:
\[
4\deg u+3k\deg v=\deg P\le 2\deg H=4\deg u+2k\deg v
\]
a contradiction. Therefore, $P$ cannot be of form (ii.3). 
\medskip

\noindent {\it Form (ii.4).} By definition, $F$ is a unit modulo $P$, but $F\not\in B^*=\C^*$. Therefore, by {\it Lemma\,\ref{unit-degree}}, $\deg P\le 2\deg F$. However, 
\[
\deg P=p\deg (vF^{s+1})+\deg F-(k-1)\deg v = p\deg (vF^{s+1})+\deg u>2\deg F
\]
which gives a contradiction. 
Therefore, $P$ cannot be of form (ii.4). 
\medskip

\noindent {\it Form (ii.5).} Write $v^{k-1}P=FQ+1$ for $Q\in B$. By {\it Lemma\,\ref{trivial-units}}, there exists $H\in B$ and $\lambda\in\R^*$ such that $\lambda\bar{F}=Q+PH$. 
However, the equalities
\[
\deg P=\deg (vF^{s+1})^p-\deg F+\deg u\,\, ,\,\, \deg Q=\deg (vF^{s+1})^p-\deg F\,\, ,\,\, \deg\bar{F}=\deg F
\]
imply $\deg P>\deg Q>\deg \bar{F}$, thus precluding the existence of the equation
$\lambda\bar{F}=Q+PM$. Therefore, $P$ cannot be of form (ii.5). 

In conclusion, $(A/PA)^*\ne\R^*$ whenever $P\in A$ and $P$ has one of the forms (ii.1)-(ii.5). 
\end{proof}


\subsection{Sporadic Embeddings}

See \cite{Koras.Palka.Russell.16}. Two known families of sporadic embeddings of $\C^*$ in $\C^2$ are parametrized as follows, where the second family has only one member. 

\begin{enumerate}
\item $X=t^{2n}(t^2+t+\frac{1}{2})$ and $Y=t^{-2n-4}(t^2-t+\frac{1}{2})$, where $n\in\Z$, $n\ge 1$. 
\medskip
\item $X=t^4(t^2+t+\frac{2}{3})$ and $Y=t^{-8}(t^2-t+\frac{1}{3})$
\end{enumerate}
In the first case, note that, if $F=4(XY-1)$, then $F=t^{-4}$. Therefore:
\[
 XF^{n+1}=t^{-2n-4}(t^2+t+{\textstyle\frac{1}{2}})=Y+2t^{-2n-3} \implies {\textstyle\frac{1}{2}}(XF^{n+1}-Y)=t^{-2n-3}
\]
We thus obtain the relation:
\[
(Y-XF^{n+1})^4=16F^{2n+3}
\]
Since $\gcd (4,2n+3)=1$ and $\gcd (Y-XF^{n+1},F)=1$ as polynomials in $B$, it follows that this is a prime relation. 

In the second case, let $F=-3(XY-1)$, $G=XF+\frac{4}{9}$ and $H=4Y-3F^2$. Then $H=t^{-6}$ and $3(X-G^2)H-3(1-FG)=t^{-2}$. This gives the relation:
\[
H=27((X-G^2)H+FG-1)^3
\]
Consider polynomials $f,g,h,p\in B=\C [x,y]=\C^{[2]}$ defined by:
\[
f=-3(xy-1)\,\, ,\,\, g=xf+{\textstyle\frac{4}{9}}\,\, ,\,\, h=4y-3f^2 \,\, ,\,\, p=h-27((x-g^2)h+fg-1)^3
\]
By direct calculation (using {\it Maple} for example), we find that $p$ is irreducible, and that the highest-degree homogeneous summand of $p$ is $x^6y^4$. 

\begin{proposition}\label{prop3} Suppose that $B=\C [u,v]$ and that $P\in B'$ is of one of the following two forms. 
\begin{itemize}
\item [(iii.1)] $P=(v-uF^{n+1})^4-16F^{2n+3}$ and $F=4(uv-1)$, where $n\ge 1$.
\medskip
\item [(iii.2)] $P=4v-3F^2-27((u-G^2)H+FG-1)^3$, where $F=-3(uv-1)$ and $G=uF+\frac{4}{9}$.
\end{itemize}
\medskip
If $P\in A$, then $(A/PA)^*\ne\R^*$. 
\end{proposition}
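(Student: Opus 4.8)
The plan is to mimic the strategy already used in \emph{Prop.\,\ref{prop1}} and \emph{Prop.\,\ref{prop2}}: assume $(A/PA)^*=\R^*$ for contradiction, exhibit inside $A/PB$ a non-unit element of $B$ whose image is invertible modulo $P$, and then derive a contradiction from the degree inequality of \emph{Lemma\,\ref{unit-degree}}. The crucial structural input is the factored form of $P$ recorded just before the proposition: in case (iii.1) the relation $(v-uF^{n+1})^4=16F^{2n+3}$ holds modulo $P$, and in case (iii.2) the relation $h=27((x-g^2)h+fg-1)^3$ holds modulo $P$ (with $F,G,H$ the obvious substitutions of $u,v$ for $x,y$). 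These are exactly the ``$f^a=g^b$ modulo $P$'' patterns that \emph{Lemma\,\ref{cusp}} is designed for — except that $B/PB=\tilde B=\C[t,t^{-1}]$ only \emph{after} we know $A/PA\cong_\R A/QA$, which is given; so $B/PB$ is a localization of a polynomial ring in one variable and \emph{Lemma\,\ref{cusp}} applies to it.

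For form (iii.1): since $\gcd(4,2n+3)=1$, \emph{Lemma\,\ref{cusp}} gives $h\in B/PB$ with $v-uF^{n+1}\equiv h^{2n+3}$ and $F\equiv h^4$ modulo $P$ (up to the unit bookkeeping in the lemma; since $\tilde B^*$ consists of monomials $\lambda t^k$ this is harmless). In particular $F$ is a unit modulo $P$. But $F=4(uv-1)\notin B^*=\C^*$, so \emph{Lemma\,\ref{unit-degree}} forces $\deg P\le 2\deg F$. On the other hand, reading off the highest-degree homogeneous summand of $P=(v-uF^{n+1})^4-16F^{2n+3}$ in the standard grading, $\deg P=(2n+3)\deg F$ (the two terms have degrees $4\big(\deg u+(n+1)\deg F\big)$ and $(2n+3)\deg F$; using $\deg F=\deg u+\deg v$ one checks the second dominates, or more simply that $\deg P\ge(2n+3)\deg F>2\deg F$ since $n\ge1$). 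Either way $\deg P>2\deg F$, contradicting $\deg P\le 2\deg F$. Hence (iii.1) is impossible.

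For form (iii.2): since $\gcd(1,3)=1$ the cube relation $h=27\big((u-G^2)H+FG-1\big)^3$ modulo $P$ shows directly that $h=4v-3F^2$ is a perfect cube, hence in particular $H$ is a unit modulo $P$ (being a divisor of $h$ up to the relation; more precisely $3(u-G^2)H+3(FG-1)$ is a unit times a cube root of $h$, and tracking through as in the pre-proposition computation $t^{-2}=3(u-G^2)H-3(1-FG)$ shows $H\mid$ a unit modulo $P$, so $H$ is invertible modulo $P$). Since $H=4v-3F^2\notin\C^*$, \emph{Lemma\,\ref{unit-degree}} gives $\deg P\le 2\deg H$. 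But the highest-degree homogeneous summand of the dehomogenization was computed to be $u^6v^4$ (the statement records this as $x^6y^4$ for the polynomial $p$), so $\deg P=10$ in the standard grading, while $\deg H=\deg(4v-3F^2)=2\deg F=2(\deg u+\deg v)=4<5$, so $2\deg H=8<10=\deg P$ — a contradiction. Hence (iii.2) is impossible as well, and in both cases $(A/PA)^*\ne\R^*$.

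The main obstacle I anticipate is \textbf{bookkeeping the units} when invoking \emph{Lemma\,\ref{cusp}} and \emph{Lemma\,\ref{trivial-units}} inside $B/PB$: the lemma is stated for $\tilde B=\C[t,t^{-1}]$, so one must first justify $B/PB\cong\C[t,t^{-1}]$ (immediate from $A/PA\cong_\R A/QA$ together with the earlier lemma computing $B/QB=\C[t,t^{-1}]$), and then be careful that the ``cube root'' or ``fourth root'' $h$ really does lie in the image of $B$ and that multiplying by the residual monomial units $\lambda t^k$ does not affect the conclusion ``$F$ (resp.\ $H$) is invertible modulo $P$.'' Once that is in place, the degree comparison via \emph{Lemma\,\ref{unit-degree}} is routine — the only arithmetic to check is that the relevant top-degree summand of $P$ has degree strictly larger than $2\deg F$ (resp.\ $2\deg H$), which follows from $n\ge1$ in (iii.1) and from the explicit monomial $x^6y^4$ recorded before the proposition in (iii.2).
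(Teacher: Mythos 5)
Your overall strategy is exactly the paper's: in each case exhibit a non-unit of $B$ ($F$ for (iii.1), $H=4v-3F^2$ for (iii.2)) whose residue in $B/PB$ is invertible, and then contradict the bound $\deg P\le 2\deg F$ (resp.\ $\deg P\le 2\deg H$) from \emph{Lemma\,\ref{unit-degree}}. The concluding degree arithmetic is right in substance, though in (iii.1) you have the dominant term backwards: with $\deg u=\deg v=1$ one gets $\deg\bigl((v-uF^{n+1})^4\bigr)=4(2n+3)$ and $\deg\bigl(16F^{2n+3}\bigr)=2(2n+3)$, so $\deg P=4\deg(uF^{n+1})=8n+12$, not $(2n+3)\deg F$; since both candidates exceed $2\deg F=4$ and are unequal (so no top-degree cancellation), the inequality $\deg P>2\deg F$ survives.

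The genuine gap is your justification that $F$ (resp.\ $H$) is a unit modulo $P$. From \emph{Lemma\,\ref{cusp}} you obtain $F\equiv\lambda h^4$ in $B/PB$, but ``in particular $F$ is a unit'' does not follow --- a fourth power of a non-unit is a non-unit. Likewise in (iii.2) the claim ``$H$ divides a unit modulo $P$'' is asserted rather than derived: the identity $3(u-G^2)H-3(1-FG)=t^{-2}$ exhibits $H$ as a \emph{summand} of a unit, not a divisor of one. The paper closes this step by reading invertibility directly off the parametrization: under the isomorphism $B/PB\cong\C[t,t^{-1}]$ coming from $(u,v)\mapsto(X(t),Y(t))$ one has $F=t^{-4}$ in case (iii.1) and $H=t^{-6}$ in case (iii.2), which are visibly units. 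If you want to stay inside your framework, you can repair (iii.1) by imitating form (ii.1) of \emph{Prop.\,\ref{prop2}}: substituting $F\equiv\mu h^4$ and $v-uF^{n+1}\equiv h^{2n+3}$ into $F=4(uv-1)$ gives $4\equiv h^4\bigl(4uh^{2n-1}+4u^2\mu^{n+1}h^{4n}-\mu\bigr)$, so $h$, hence $F$, is a unit; alternatively, note that a common zero of $P$ and $F$ would force $v^4=P+16F^{2n+3}=0$ and $uv=1$ simultaneously, so none exists and $F$ is invertible mod $P$ by the Nullstellensatz. With the unit statements actually established, the rest of your argument goes through.
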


\begin{proof} Assume that $(A/PA)^*=\R^*$. 
\medskip

\noindent {\it Form (iii.1).} By the foregoing discussion, $F$ is a unit modulo $P$. Since $F\not\in B^*=\C^*$, 
{\it Lemma\,\ref{unit-degree}} implies that, if $P\in A$, then $\deg P\le 2\deg F$. But this is impossible, since $\deg P=4\deg (uF^{n+1})$. 
\medskip

\noindent {\it Form (iii.2).} Let $H=4v-3F^2$. By the foregoing discussion, $H$ is a unit modulo $P$. Since $H\not\in B^*=\C^*$, 
{\it Lemma\,\ref{unit-degree}} implies that, if $P\in A$, then $\deg P\le 2\deg H$. But this is impossible, since $\deg P=6\deg u+4\deg v$, while $\deg H=2\deg u+2\deg v$.

Therefore, $(A/PA)^*\ne\R^*$ whenever $P\in A$ and $P$ has one of the forms (iii.1)-(iii.2). 
\end{proof}

This completes the proof of {\it Thm.\,\ref{main}}.


\section{A Remark on $\Sp^1$-Bundles over $\Sp^1$}\label{torus}

Let $R=\R [\Sp^1]$ be the coordinate ring of $\Sp^1$, and let $R=\R [a,b]$, where $a^2+b^2=1$. Define the affine surface $T$ over $\R$ with coordinate ring:
\[
A=R[x,y]/(x^2+y^2-(a+2)^2)
\]
Let $\pi :T\to \Sp^1$ be the surjective morphism induced by the inclusion $\R [a,b]\subset A$. The fiber over $(r,s)\in\Sp^1$ is defined by the quotient ring:
\[
A/(a-r,b-s)= \R [x,y]/(x^2+y^2-(r+2)^2)\cong_{\R}\R [\Sp^1]
\]
So every fiber of $\pi$ is isomorphic to $\Sp^1$, and $T$ is an $\Sp^1$-bundle over $\Sp^1$. 

In $A$, we have $4a=x^2+y^2-a^2-4=x^2+y^2+b^2-5$. Therefore, $A=\R [b,x,y]$, meaning that $T$ admits a polynomial embedding in $\R^3$. The defining polynomial relation is:
\[
\textstyle \frac{1}{4}(x^2+y^2+b^2)^2+b^2=1 \implies (x^2+y^2+b^2)^2=16(x^2+y^2)
\]
We thus recognize $T$ as the surface obtained by revolving the circle $(y-2)^2+b^2=1$ about the $b$-axis. 

\begin{proposition} $T\not\cong_{\R}\Sp^1\times\Sp^1$
\end{proposition}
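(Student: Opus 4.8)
The plan is to distinguish $T$ from $\Sp^1 \times \Sp^1$ by computing the polar group $\Pi(\R[T])$ and comparing it with $\Pi(\R[\Sp^1 \times \Sp^1])$. The complexification of $\R[\Sp^1]=\R[a,b]/(a^2+b^2-1)$ is $\C[a,b]/(a^2+b^2-1)\cong\C[t,t^{-1}]$, via $a+ib=t$; hence the complexification of $\R[\Sp^1\times\Sp^1]$ is $\C[t,t^{-1},s,s^{-1}]$, a UFD, and the complexification of $\R[T]$ is $\C[t,t^{-1}][x,y]/(x^2+y^2-(a+2)^2)$. Writing $a+2 = \tfrac12(t+t^{-1})+2 = \tfrac12 t^{-1}(t+1)(t+1+2) = \tfrac12 t^{-1}(t+1)^2$... — more usefully, $a+2=\tfrac{1}{2t}(t^2+4t+1)$, whose roots are $t=-2\pm\sqrt3$, both nonzero and distinct, so $a+2$ is (up to a unit of $\C[t,t^{-1}]$) a product of two distinct primes of $\C[t,t^{-1}]$. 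Then $x^2+y^2-(a+2)^2 = (x+iy)(x-iy)-(a+2)^2$, and setting $u=x+iy$, $v=x-iy$ we get $uv = (a+2)^2$ inside $\C[t,t^{-1}][u,v]$.

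The key step is to identify the complexified coordinate ring $\widetilde B := \C\otimes_\R\R[T]$ concretely and determine its units and its divisor class structure. Since $uv=(a+2)^2$ with $a+2$ a product of two distinct primes $\pi_1,\pi_2$ of $\C[t,t^{-1}]$, the ring $\widetilde B$ is the coordinate ring of the hypersurface $uv=(a+2)^2$ over $\C[t,t^{-1}]$; localizing, $\widetilde B$ is normal and one computes its units to be $\C^* \cdot \langle t \rangle \cdot \langle u/\pi_1^2\rangle$-type relations — more carefully, I would show $\widetilde B^* = \C^* \times \langle t\rangle$ (the class of $u$ is not a unit because $u$ vanishes on the divisor $u=\pi_1=0$ without $v$ being invertible there), while the class group $\mathrm{Cl}(\widetilde B)$ picks up $\Z/2$-torsion coming from $u \sim \pi_1\pi_2$ with $[\pi_1]=[\pi_2]$ being $2$-torsion on the conic-bundle fibre. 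Concretely the conjugation $t\mapsto t^{-1}$ swaps $u\leftrightarrow v$ and fixes $\C[t,t^{-1}]$, and one computes $\Pi(\R[T]) = L^*/\widetilde B^* K^*$ to be nontrivial — for instance $[u]$ has order $2$, since $[u][v]=[u][\bar u]=1$ forces $[u]^2=[(a+2)^2]=1$ while $[u]\neq 1$. By contrast $\Pi(\R[\Sp^1\times\Sp^1]) = L^*/(\C[t^{\pm},s^{\pm}])^*K^*$ is trivial, because $\C[t^{\pm},s^{\pm}]$ is a UFD with $\C^*\times\Z^2$ as units, every element is (up to a unit) conjugation-symmetric times a product of a prime and its conjugate, so by Theorem \ref{UFDUFD} every class is trivial. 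Since $\Pi$ is an isomorphism invariant of the real algebra, $T\not\cong_\R\Sp^1\times\Sp^1$.

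The main obstacle I anticipate is the computation of $\widetilde B^*$ and the divisor-theoretic analysis of $\widetilde B = \C[t,t^{-1}][u,v]/(uv-(a+2)^2)$: one must verify that $u$ (equivalently $x+iy$) is not a unit and that its class in the appropriate Picard/polar quotient is genuinely nontrivial rather than an artifact. Here I would use that $\widetilde B$ is the coordinate ring of a smooth affine surface which is an $\A^1$-minus-two-points bundle (or a conic bundle with two degenerate fibres) over $\C^*$, pin down $\mathrm{Pic}$ and units via the localization/excision sequence, and then feed this into the definition $\Pi(A)=L^*/B^*K^*$ together with the relation $[f]^{-1}=[\bar f]$. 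An alternative, possibly cleaner route that sidesteps part of this: exhibit directly an element $w\in L^*$ with $w\bar w\in K^*$ but $w\notin \widetilde B^* K^*$ (take $w$ a square root of $(a+2)^2/(a+2)^2$... rather, $w=u/(a+2)$, which satisfies $w\bar w = uv/(a+2)^2 = 1$, so $[w]$ is $2$-torsion, and $w\notin K^*$ since $u\notin\C[t,t^{-1}]\cdot(a+2)$), and show the analogous group for $\Sp^1\times\Sp^1$ has no $2$-torsion. Either way, the crux is controlling the arithmetic of the divisor $a+2=-2+\tfrac12(t+t^{-1}+4)$ on $\C^*$ and its interaction with conjugation.
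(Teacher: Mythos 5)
Your proposal has a genuine gap, and it lies in the comparison side of the argument rather than in the analysis of $T$. You claim that $\Pi(\R[\Sp^1\times\Sp^1])=L^*/W^*K^*$ (with $W=\C[t^{\pm 1},s^{\pm 1}]$, conjugation acting by $t\mapsto t^{-1}$, $s\mapsto s^{-1}$) is trivial, and, as a fallback, that it has no $2$-torsion. Both claims are false. For instance $[t-2]\ne 1$: if $t-2=ct^ms^n k$ with $k=\bar k$, then comparing with the conjugate forces $n=0$ and $(t^{-1}-2)/(t-2)\cdot t^{2m}\in\C^*$, which is impossible since the left side has a zero at $t=1/2$ and a pole at $t=2$. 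Worse for your fallback, $p=t-s$ satisfies $\bar p=t^{-1}-s^{-1}=-(ts)^{-1}p$, so $[p]=[\bar p]=[p]^{-1}$ and $[p]^2=1$, while a parity check on the exponent of $t$ shows $[p]\ne 1$; thus $\Pi(\R[\Sp^1\times\Sp^1])$ does contain nontrivial $2$-torsion. (Polar groups of real forms with UFD complexification are typically large: every irreducible $f$ with $f\not\sim\bar f$ contributes.) A secondary error: the step ``$[u][v]=[u][\bar u]=1$ forces $[u]^2=1$'' is a non sequitur, since $[f][\bar f]=1$ holds for \emph{every} class in \emph{every} polar group and says nothing about $[u]^2$.

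The irony is that you already have in hand a fact that finishes the proof without any polar groups: your asserted computation $\widetilde B^*=\C^*\times\langle t\rangle\cong\C^*\times\Z$. Since $W^*\cong\C^*\times\Z^2$, the unit groups of the two complexifications are non-isomorphic, so $\widetilde B\not\cong_{\C}W$ and a fortiori $T\not\cong_{\R}\Sp^1\times\Sp^1$. Of course this reduces the burden to actually proving $\widetilde B^*=\C^*t^{\Z}$, which you only sketch via Picard/localization arguments. The paper does exactly this piece cleanly: it puts the $\Z$-grading on $\widetilde B=\C[a,b][u,v]/(uv-(a+2)^2)$ with $\deg u=1$, $\deg v=-1$, observes that units are homogeneous, that a unit of nonzero degree would force $u\in\widetilde B^*$ (impossible since $u$ vanishes on a nonempty curve), and concludes $\widetilde B^*\subset B_0=\C[a,b]$, hence $\C[\widetilde B^*]=\C[a,b]\subsetneq\widetilde B$ while $\C[W^*]=W$. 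I recommend you replace the polar-group comparison by this grading argument (or by the direct unit-group count it yields) and discard the claims about $\Pi(\R[\Sp^1\times\Sp^1])$.
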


\begin{proof} (due to Daigle) Consider the integral domain $B=\C\otimes_{\R}A=\C [a,b,u,v]/(uv-(a+2)^2)$, where $u=x+iy$ and $v=x-iy$. Then $B$ has a $\Z$-grading 
$B=\bigoplus_{n\ge 0}B_n$ over $\C [a,b]$ in which $u$ and $v$ are homogeneous, $\deg u=1$ and $\deg v=-1$. In addition: 
\[
B_0=\C [a,b,uv]=\C [a,b]\,\, ,\,\, B_n=u^nB_0 \,\, {\rm for}\,\,  n\ge 1\,\,\ ,\,\, B_n=v^nB_0 \,\, {\rm for}\,\, n\le -1
\]
Any unit of $B$ is homogeneous. Given $w\in B^*$, assume that $\deg w=n\ge 0$ and write $w=u^nc$ for $c\in  B_0$. If $n\ne 0$, then $u\in B^*$, which is absurd since $\dim_{\C}(B/uB)=1$. Therefore, $n=0$. If $\deg w\le 0$, then in the same way we get $\deg w=0$. So $\deg w=0$ in any case. Therefore:
\[
B^*\subset B_0 \implies \C [B^*]=B_0=\C [a,b]
\]

Consider $\C\otimes (\Sp^1\times\Sp^1)=\C^*\times\C^*$, which has coordinate ring $W=\C [s,s^{-1},t,t^{-1}]$. Since $W=\C [W^*]$, it follows that
$B\not\cong_{\C}W$. Therefore, $A\not\cong_{\R}\R [\Sp^1\times\Sp^1]$.
\end{proof}


\noindent {\bf Question.} Does the real algebraic torus $\Sp^1\times\Sp^1$ admit a polynomial embedding in $\R^3$ ?
\medskip


\bibliographystyle{amsplain}

\noindent \address{Department of Mathematics\\
Western Michigan University\\
Kalamazoo, Michigan 49008}\\
\email{gene.freudenburg@wmich.edu}
\bigskip

\end{document}